\documentclass[a4paper,11pt]{amsart}
\usepackage[top=2.2cm,left=2.8cm,right=2.8cm,bottom=2.8cm]{geometry}
\usepackage[foot]{amsaddr}

\usepackage{hyperref}
\hypersetup{
    colorlinks,
    citecolor=green,
    filecolor=black,
    linkcolor=blue,
    urlcolor=blue
}
\hypersetup{linktocpage}

\usepackage{cite}
\usepackage{graphicx}

\usepackage{amsbsy}
\usepackage{latexsym}
\usepackage{amsfonts}
\usepackage{amssymb}
\usepackage{upgreek}
\usepackage[usenames]{color}
\usepackage{amsmath,amsthm}
\usepackage{enumerate}

\usepackage{tikz}
\usetikzlibrary{arrows}
\usepackage{mathdots}
\usepackage{mathtools}

\usepackage{stmaryrd}
\usepackage{dsfont}

\usepackage{booktabs} 
\usepackage{pifont}
\usepackage{tikz}
\usetikzlibrary{arrows.meta,shapes.geometric,matrix,positioning,calc}

\DeclareMathOperator{\ran}{ran}

\DeclareMathOperator{\linspan}{span}

\DeclareMathOperator*{\Div}{div}

\providecommand{\abs}[1]{\lvert#1\rvert}

\providecommand{\norm}[1]{\lVert#1\rVert}
\providecommand{\bignorm}[1]{\bigl\lVert#1\bigr\rVert}
\providecommand{\Bignorm}[1]{\Bigl\lVert#1\Bigr\rVert}

\numberwithin{equation}{section}

\newtheorem{theorem}{Theorem}

\newtheorem{prop}[theorem]{Proposition}
\newtheorem{cor}[theorem]{Corollary}

\theoremstyle{definition}

\newtheorem{example}[theorem]{Example}

\theoremstyle{remark}

\theoremstyle{plain}

\newcommand{\R}{\mathbb{R}}

\newcommand{\N}{\mathbb{N}}

\newcommand{\Chi}{\raise .3ex
\hbox{\large $\chi$}}

\title[Numerical Stability in Neural Network Training]{Preconditioning and Numerical Stability in Neural Network Training for Parametric PDEs}
\author{}

\author{Markus Bachmayr$^1$}
\email{bachmayr@igpm.rwth-aachen.de}
\author{Wolfgang Dahmen$^{1,2}$}
\email{dahmen@igpm.rwth-aachen.de}
\author{Chenguang Duan$^{1}$}
\email{duan@igpm.rwth-aachen.de}
\author{Mathias Oster$^{1}$} 
\email{oster@igpm.rwth-aachen.de}

\address{$^1$ Institut f\"ur Geometrie und Praktische Mathematik, RWTH Aachen University, Im S\"usterfeld 2, 52072 Aachen, Germany}
\address{$^2$ University of South Carolina, Mathematics Department, 1523 Greene Street, Columbia, SC 29208, USA}

\thanks{The authors acknowledge funding by Deutsche Forschungsgemeinschaft (DFG, German Research Foundation) -- project number 442047500/SFB 1481 \emph{Sparsity and Singular Structures}. M.B.\ acknowledges funding by the European Union (ERC, COCOA, 101170147).
W.D. acknowledges funding by National Science Foundation grants FRG DMS-2245097,   DMS-2513112, and RTG DMS-2038080.}

\begin{document}

\maketitle

\vspace{-18pt}
\begin{abstract}
In the context of training  neural network-based approximations of solutions of parameter-dependent PDEs, we investigate the effect of preconditioning via well-conditioned frame representations of operators and demonstrate a significant improvement on the performance of standard training methods. We also observe that standard representations of preconditioned matrices are insufficient for obtaining numerical stability and propose a generally applicable form of stable representations that enables computations with single- and half-precision floating point numbers without loss of precision.

 \bigskip
\noindent \emph{Keywords.} parameter-dependent partial differential equations, preconditioning, finite element frames, neural networks, numerical stability

\smallskip
\noindent \emph{Mathematics Subject Classification.} 41A30, 65D40, 65N55
\end{abstract}

\section{Introduction}

\subsection{Problem formulation}
The need for computationally efficient approximations of solutions of parameter-dependent PDEs arises in many different applications. Abstractly, given a family of differential operators $\mathcal R_y$ in residual form,   parameterized by $y\in Y$,  
solutions $u(x,y)$  of $\mathcal R_y(u(\cdot, y) ) = 0$ can be viewed as functions of spatio-temporal variables $x\in \Omega \subset \R^d$ as well as of the parametric variables $y$
ranging over a given parameter domain $Y$. Here $Y$ can be a subset of a finite-dimensional  Euclidean space, $Y \subset \R^p$ with some $p\in\N$, or more generally of an infinite-dimensional sequence or function space.
In which sense   $u$ (or a derived quantity of interest) is to be approximated over $\Omega\times Y$, is determined by
   approximating the mapping $y \mapsto  u(y) = u(\cdot, y)$ in  appropriate norms
   for functions  on $Y$ and $\Omega$.  This task, sometimes referred to as ``operator learning,''   can arise both in a context of model reduction (as for the accelerated solution of inverse or optimization problems) and in uncertainty quantification.

Especially for challenging problems that are not amenable to linear model reduction methods such as POD or  reduced basis methods, approximations for $u$ based on neural networks are of interest. While in certain approaches such as physics-informed neural networks (PINNs), values of
$u$ are approximated by a neural network with input parameters $x$ and $y$,   it can be beneficial to use neural network representations only in the (potentially high-dimensional) $y$-variable, while the dependence on $x$ is approximated by well-understood classical discretization methods. Assuming $\{ \varphi_i \}_{i \in \mathcal I}$ with $\#\mathcal I < \infty$ to be a suitable set of fixed basis functions on $\Omega$, this leads to hybrid approximations of the form
\begin{equation}\label{eq:hybrid}
  u(x,y) \approx  \tilde u(x, y;\theta) = \sum_{i \in \mathcal I} \mathbf{u}_i(y; \theta) \varphi_i(x),
\end{equation}
where the approximation coefficients $\mathbf{u} = (\mathbf{u}_i)_{i \in \mathcal I}$ are jointly represented as neural networks and depend on $y$ as well as on a vector $\theta$ of trainable network parameters. Note that the basis functions $\varphi_i$ could be 
standard finite element basis functions or elements of a problem-adapted reduced basis. Representations of the form \eqref{eq:hybrid} have been used, for example, for problems with finite-dimensional $Y$ in \cite{GPRSK:21,KPRS:22,GHRK:25,BachmayrDahmenOster2024,CDG2025},
but are also common in the setting of \emph{operator learning} with infinite-dimensional $Y$ \cite{BHKS2021,KLLABSA2023}.

For any method that yields a quantification of the achieved accuracy of approximations with respect to a suitable norm, it is important to decide in which (weak) sense the equation
$\mathcal R_y(u(\cdot, y) ) = 0$ is required to hold. For a proper weak formulation, one should have $\mathcal R_y \colon U_y \to V_y'$ with Hilbert spaces $U_y$, $V_y$ for each $y$, postponig for a moment further comments on the choice of the trial- and test-spaces $U_y, V_y$. A commonly used   way to obtain the approximation $\tilde u$ to $y\mapsto u(y)\in U_y$ is then to employ {\em empirical risk minimization}
\begin{equation}\label{eq:solutionregression}
    \min_\theta \sum_{k=1}^K \norm{ u(y_k)  - \tilde u(y_k; \theta) }_{U_{y_k}}^2,
\end{equation}
see, e.g.  \cite{GPRSK:21,KPRS:22}, where for the sake of computational convenience,
the norms $\|\cdot\|_{U_y}$ are sometimes replaced by $L_2$-norms.
Viewing the parameters as random variables with probability distribution $\mu$ and assuming
the $y_k$, $k = 1,\ldots, K$, to be i.i.d.\ draws from $\mu$, this corresponds to approximately minimizing a Monte Carlo approximation of
the expectation $\mathbb{E}_{y\sim \mu}\big[\|u(y)- \tilde u(y_k; \theta)\|^2_{U_{y_k}}\big]$.
If the norms $\|\cdot\|_{U_y}$ are uniformly equivalent over $Y$, the expectation is a standard Bochner norm. If not, under mild conditions on the scalar products, inducing $\|\cdot\|_{U_y}$, the above expectation is still well-defined as a so-called direct integral, see  \cite{BachmayrDahmenOster2024}. 

Alternatively, to mitigate the computational cost of a large number of high-fidelity training
samples $u(y_k)$, 
 it can also be advantageous to instead use {\em residual losses}. To infer from a residual the accuracy of the approximation it is essential in which norm the residual
is measured. As shown in \cite{BachmayrDahmenOster2024}, under the above assumptions on the mapping properties of $\mathcal R_y$, a proper choice is $\norm{\mathcal R_y (w)}_{V_y'}$ (the dual test norm), because if the underlying weak formulations are stable one has  $\norm{\mathcal R_y (w)}_{V_y'} \eqsim \norm{ w }_{U_y}$, a property that we refer to as \emph{variational correctness}. Hence the size of the loss is  a reliable and efficient a posteriori error
bound. Then \eqref{eq:solutionregression} can be replaced by
\begin{equation}\label{eq:residualregression}
    \min_\theta \sum_{k=1}^K \bignorm{ \mathcal R_{y_k}\bigl( \tilde u(y_k; \theta) \bigr) }_{V_{y_k}'}^2.
\end{equation}

The issue that we address in this paper concerns both types of optimization problems
\eqref{eq:solutionregression} and \eqref{eq:residualregression}. The quality of corresponding numerical results depends crucially on the performance of optimization schemes used for the loss minimization. While the nonlinear structure of neural networks poses an unavoidable obstacle, the separation of spatial and parametric variables, facilitated by the hybrid format \eqref{eq:hybrid}, nevertheless offers substantial
advantages. A central point in what follows
is that each summand in the objective is a quadratic form that offers room
 for formulating the objective such that it becomes favorable for descent strategies. An intrinsic obstruction lies in the fact that the norms on $U_{y}$ and $V_y$ are typically induced by  differential operators, for example by the Laplacian when $U_{y} = H^1_0(\Omega)$. We explain  later below in more detail that
 both types of loss \eqref{eq:solutionregression} and \eqref{eq:residualregression}
exhibit similar features with regard to the dependence on differential operators,
opening the door to exploiting {\em preconditioning} techniques for removing PDE-related ill-conditioning from the optimization problems.

\subsection{Choice of basis, preconditioning, and performance of optimization schemes}
\newcommand{\cI}{\mathcal{I}}
The most basic and well-understood scenario is $\mathcal R_y v = A_y v - f$, where $A_y\colon U_y \to V_y'$ is a linear operator and $f \in V_y'$. This corresponds to a weak formulation
\[
  \langle A_y u(y) , v\rangle = f(v), \quad \forall \,v \in V_y, \; y \in Y.
\] 
Variational correctness in the above sense is then equivalent to the well-posedness
of this weak formulation (that is, the conditions in the Babu\v{s}ka-Ne\v{c}as theorem hold), which 
in turn means that $A_y : U_y\to V'_y$ is a norm isomorphism. 

For simplicity of exposition, we assume that the spaces $U_y$, $V_y$ agree for all $y$ with equivalent norms, so that we can
suppress dependence on $y$ in corresponding notations. Note that then the problems
\begin{equation}
\label{U}
\min_{w\in U}\|A_y w- f\|^2_{V'}\quad \mbox{or}\quad \min_{w\in U}\|u(y)-w\|^2_U
\end{equation}
are {\em well-conditioned} when posed in $V'$, $U$, respectively, so that descent methods in $U$ or $V'$ would converge rapidly. However, as soon as these problems are posed over some finite dimensional
trial space 
\[
U_\cI = {\rm span}\,\Phi,\quad  \Phi = \{ \varphi_i \colon i  \in \mathcal I\}, 
\]
the objective becomes a quadratic form over $\R^{\#\cI}$ whose numerical conditioning now depends
on the choice of the representation system $\Phi$ in the same way as it affects 
the performance of iterative solvers for resulting linear systems, namely through the
condition number of the Gramian $(\langle \varphi_i, \varphi_{i'}\rangle_U )_{i,i' \in \mathcal I}$, associated to $\Phi$. Obviously, an ideal choice of $\Phi$ would be a subset of a
$U$-{\em orthonormal} system that would exactly preserve the properties of the problem on \eqref{U} and lead to resulting condition numbers uniformly bounded in $\#\mathcal I$.
While suitable reduced basis systems may have  this property, this is not true 
for a generic finite element basis, which is the primary concern in what follows.

 We remark that if for $V'\neq V$, the residual loss $\|A_y w- f\|^2_{V'}$ is replaced by the simpler one
$\|A_y w- f\|^2_{L_2}$ as in PINN, $A_y$ is not necessarily an isomorphism onto $L_2$, so that even the continuous problem on $U$ is ill-posed, adding
to the difficulties incurred by subsequent discretizations.

{\em Preconditioning} the Gramian $(\langle \varphi_i, \varphi_{i'}\rangle_U )_{i,i' \in \mathcal I}$ is of obvious importance for reducing the number of descent steps in
a minimization and thus a first subject of this paper. However, there is a second issue
that turns out to play a decisive role, especially in connection with optimization, or more precisely,
when dealing with nonlinear parameterizations for the solution coefficients $\mathbf u$, namely the possible {\em loss of numerical precision}, due to the standard application of a preconditioner.

It needs to be noted  that this is indeed a separate issue from obtaining a well-conditioned problem for the coefficients $\mathbf{u}$; see, for example, related discussions in 
\cite{BK20,MR4839137}.  We will demonstrate how 
an appropriate combination of a variational formulation for the family of PDEs and a representation systems $\Phi$ for the spatial discretizations affects several points in the solution process: the {\em norms} with respect to which errors can be controlled,   effective {\em initializations} and the {\em converence} of optimization schemes, as well as  limitations on attainable precision in floating point arithmetic.

While difficulties in reducing the objective  can to some degree be mitigated by more sophisticated optimization methods, for example the Gauss-Newton (in this context also known as \emph{natural gradient descent}),   such techniques do not address the loss of significant digits in the result due to the action of ill-conditioned matrices. Especially when using lower-precision arithmetic for using neural network training, this will be seen to  severely limit the attainable accuracies.

\subsection{Stable representations and relation to prior works}

As we demonstrate theoretically and quantify numerically in the following sections, well-posed variational formulations combined with well-conditioned frame representations can lead to greatly improved convergence of optimization methods. 
However, we stress that preconditioning by itself does not automatically lead to satisfactory results when using lower machine precision, such as 32-bit or even 16-bit floating point numbers. As an additional ingredient, recovering full-precision results even for low machine precision,  requires {\em well-conditioned decompositions} of discretization matrices, which we refer to as {\em stable representations} and explain below in  more detail. 

The approach proposed in this paper can be seen in the context of   several prior works on similar issues, albeit from
different perspectives and for varying applications. In the context of iteratively solving
large linear systems, it is well understood that   a good preconditioner may significantly speed up
the convergence of iterative schemes, but may fail to achieve high accuracy in the numerical solution. The reason is that   the preconditioned matrix may be given as a product of several matrices which themselves are still ill-conditioned; see, for example, the references in \cite{MR4839137}.
For systems arising from discretizing an operator equation, this issue has been addressed systematically by so-called {\em full operator preconditioning} (FOP) in \cite{MR4839137}. The key idea is to first reformulate a given operator equation on the continuous level by means of multiplying the original operator from the left and the right by suitable operators in such a way that the resulting product becomes well-conditioned on spaces for which suitable trial and test bases are available (such as $L_2$-spaces).

The prospective gain lies in avoiding separate discretizations of ill-conditioned factors.
In summary, to warrant both rapid convergence of the preconditioned iteration as well as high numerical accuracy, the condition of the transformed operator equation as well as the condition of
trial basis should be moderate or small. The present paper shares the central objective of avoiding adverse effects of ill-conditioned factors in a preconditioned matrix. 
However, an important distinction is that for the type
of problems considered here, we do not need to first reformulate an operator equation
on the continuous level. 
Instead, it suffices to use a non-standard factorization of standard preconditioned matrix representation. To the best of our knowledge, this fact has been observed first in the 
context  of multilevel tensor representations   in \cite{BK20}, which is the main source of inspiration also for the present paper. 
Moreover, the role of frames in the present context can be related to ``suitability'' of trial bases in the FOP context and hence is a 
task to be addressed in either approach.  

The main mechanism used here has subsequently also been exploited in the quite different context 
of quantum algorithms for finite element systems \cite{DP25}. There, a central role is played by so-called block encodings of matrices revolving around projections on unitary representations under normalization constraints. The complexity of such algorithms, determined by the number of iterations,   is  governed by
the effective condition of such block encodings, which again depends on factorizations with
well-conditioned factors. Here, ill-conditioned factors cause a loss in precision due to projection and normalization constraints.

From the point of view of only speeding up convergence, there are numerous other
related works, most notably in the context of machine learning and neural operator learning.
For instance, for neural network approximations of parameter-dependent problems, preconditioning also enters the discussion in \cite{GHRK:25}. In this case, left preconditioning is combined with a least squares formulation to obtain a well-conditioned problem. However, this amounts to using an $L_2$-space as trial space, so that errors can only be controlled with respect to this norm.
Under the flag of natural gradient flows in the context of supervised learning, the approximate inversion of the Gramian of the weight gradient $\nabla_\theta u(\cdot;\theta)$
can be viewed as preconditioning; see, for example, \cite{Cayci,Wuchen,Peher1,DLTW2025}. The scalable solution of the arising subproblems, however, again hinges on suitable preconditioning for these Gramians.

   In the present paper, we show   that also standard multilevel finite element preconditioners can be brought into alternative matrix forms -- computable with the same ease and efficiency as their standard counterparts -- that lead to numerically stable methods without loss of accuracy even  for low machine precision. 
   Hence, approximating the spectrum of Hermitian squares of operators as in 
    \cite{MR4839137} %
    is  not required for avoiding a loss of precision. As shown below, it can instead suffice to re-express standard preconditioned matrices in terms of slightly different alternative representations.

\section{Norm stability of representation systems and condition numbers}

We first discuss the influence of the representation system $\Phi = \{ \varphi_i \colon i  \in \mathcal I\}$ in
\eqref{eq:hybrid} on the performance of optimization methods for \eqref{eq:solutionregression} and \eqref{eq:residualregression}.
For simplicity of exposition,   we focus on Galerkin discretizations of elliptic problems. 
In these problems, the identical trial and test spaces can be chosen to be independent of the parameter and are denoted by $U$.
The basic principles apply more generally to Petrov-Galerkin-type methods, where trial and test spaces differ and may depend (even as sets) on $y$.

In order to explain  the basic concepts, we use the classical model problem
\begin{equation}
  - \Div \bigl(a_y \nabla u(y) \bigr) = f\quad \text{ in $\Omega$,} \qquad u|_{\partial\Omega} = 0,
\end{equation}
in appropriate weak formulation, where the diffusion field $a_y$ is bounded and strictly positive uniformly with respect to $y \in Y$.

\subsection{Examplary variational formulations}

In what follows, we make use of a common structure in weak formulations. We assume that the problem can be written in the form: find $u \colon Y \mapsto U$ such that
\begin{equation}\label{eq:generalform}
    \langle \mathcal D u(y), \mathcal D v \rangle_y = \langle \ell, v \rangle \quad \text{for all $v \in U$, $y \in Y$.} 
\end{equation}
Here we require the $y$-independent mapping $\mathcal D \colon U \to H$, with $H = \bigl(L_2(\Omega)\bigr)^m$ for appropriate $m \in \N$,  to be bounded and injective such that $\mathcal D \colon U \to \ran(\mathcal D) \subset H$ is an isomorphism. Moreover,  we assume $\langle \cdot, \cdot \rangle_y$ to be a bilinear form on $H$ that may depend on $y$. We are interested
in equivalently expressing \eqref{eq:generalform} as a quadratic minimization problem.

\begin{example}
\label{ex:E}
We assume that for $a_y\in L_\infty(\Omega)$ there exist constants $a_1, a_\infty$ such that $0<a_1\le a_y(x)\le a_\infty <\infty$ for all $x\in\Omega$ and $y \in Y$.
Then for $U= H^1_0(\Omega)$, $\mathcal D = \nabla$, $m = d$ and $\langle \cdot, \cdot \rangle_y = \langle a_y \cdot, \cdot\rangle_{(L_2)^m}$, the above assumptions are valid and the solution to 
\eqref{eq:generalform} is characterized as the unique minimizer of
the standard energy minimization
\begin{equation}\tag{E}\label{eq:E}
   \min_{u \in H^1_0(\Omega) } \bigl\{  \textstyle\frac12 \displaystyle \langle a_y \nabla u, \nabla u \rangle_{L_2} - \langle f, u\rangle \bigr\}.
\end{equation}
Moreover, we have
\begin{equation}
\label{stabvar}
c_U\|u\|^2_{U}\leq \langle \mathcal Du,\mathcal Du\rangle_y	\leq C_U\|u\|^2_{U}\quad  \forall \,u\in U, \,y\in Y
\end{equation}
with constants $0<c_U \le C_U<\infty$ depending only on $\Omega$, $a_1$, and $a_\infty$.
\end{example}

\begin{example}
\label{ex:LS}
Under the assumptions of Example \ref{ex:E}, introducing $\sigma := a_y\nabla u$ as an auxiliary variable, the solution to \eqref{eq:E}
can also be obtained via the
{\em first-order system least squares} (FOSLS) formulation 
\begin{equation}\tag{LS}\label{eq:LS}
  \min_{ \substack{ u \in H^1_0(\Omega)  \\ \sigma \in H(\Div,\Omega) }} \Bigl\{  \norm{ \Div \sigma - f }_{L_2}^2 + \norm{a_y^{-1} \sigma - \nabla u }_{L_2}^2 \Bigr\}.
\end{equation}
In this case, $U = H^1_0(\Omega) \times H(\Div,\Omega)$,  $m=2d+1$, and \eqref{stabvar} holds for 
\[
  \mathcal D (v, \tau) = \begin{pmatrix}  \nabla v  \\ \tau \\ \Div \tau \end{pmatrix}, \quad 
    \bigl\langle (g, \tau, p) , (h, \rho, q)   \bigr\rangle_y  = \langle  p, q  \rangle_{L^2} + \langle a_y^{-1} \tau - g,  a_y^{-1} \rho - h \rangle_{(L^2)^d}.
\]
In fact,
 by \cite[Thm.~5.15]{MR2490235} (which was originally obtained in \cite{FOSL}) there exist $c_U,C_U>0$ depending on $a_1$ and $a_\infty$ in Example \ref{ex:E} such that 
\begin{align*}
	\bigl\langle (\nabla u, \tau,\mathrm{div}(\tau)),(\nabla u, \tau,\mathrm{div}(\tau))\bigr\rangle_y  \geq  c_U \bigl(\|\tau\|_{H(\mathrm{div})}^{2}+\|u\|_{H^1_0(\Omega)}^{2} \bigr).
\end{align*}
Similarly, by the upper bound on $a_y$ and the Cauchy-Schwarz inequality, we have 
\begin{align*}
	\bigl\langle (\nabla u,\tau,\mathrm{div}(\tau)),(\nabla v,\sigma,\mathrm{div}(\sigma))\bigr \rangle_y \leq C_{U}\bigl( \|\mathrm{div}(\tau)\|\|\mathrm{div}(\sigma)\|+(\|\tau\|+\|\nabla u\|)(\|\sigma\|+\|\nabla v\|) \bigr),
\end{align*}
see  \cite{MR2490235,FOSL,BachmayrDahmenOster2024} for further details.
\end{example}

One advantage of \eqref{eq:LS} is that information on the flux $\sigma$, which is often of primary interest, is obtained without post-processing (which may diminish accuracy) of the solution $u$ to \eqref{eq:E}. In the context of variationally correct residual regression \cite{BachmayrDahmenOster2024}, the following observation is essential:
while substituting an approximate solution into the energy functional \eqref{eq:E}
does not show its accuracy, the formulation \eqref{eq:LS} has the advantage of directly giving a quantification of the total solution error. Indeed, for any $v \in H^1_0(\Omega)$, $\tau \in H(\operatorname{div},\Omega)$, for the error with respect to the solution $(u,\sigma)$ of \eqref{eq:LS}, we have
\begin{equation}\label{eq:LSreserr}
   \norm{ u - v }_{H^1_0}^2 + \norm{ \sigma - \tau }_{ H(\operatorname{div}) }^2  
     \eqsim \norm{ \Div \sigma - f }_{L_2}^2 + \norm{a_y^{-1} \sigma - \nabla u }_{L_2}^2 ,
\end{equation}
with proportionality constrants depending only on $a_1,a_\infty$, and $\Omega$.

Note that $\ell$ in \eqref{eq:generalform} may generally also depend on the parameter $y$.
We dispense here with including this case since it is not essential for
subsequent considerations. Least squares formulations similar to \eqref{eq:LS} exist also for parabolic problems \cite{FK2021} and for acoustic wave equations \cite{FGK2023}, and we expect that the results given here can be transferred directly to FOSLS formulations of such problems.

\subsection{Riesz basis and frame representations}

With a choice of $\Phi$, the problems \eqref{eq:E} and \eqref{eq:LS} can both be written in the form
\begin{equation}\label{eq:residualregression}
    \min_\theta \sum_{k=1}^K  \bigl( \langle \mathbf A_{y_k} \mathbf u(y_k; \theta),  \mathbf u(y_k; \theta) \rangle_{\ell_2}  - \langle \mathbf f ,  \mathbf u(y_k; \theta) \rangle_{\ell_2} \bigr) ,
\end{equation}
based on \eqref{eq:generalform}, where
\begin{equation}
 \mathbf A_y = \bigl(  \langle \mathcal D \varphi_{i'}, \mathcal D \varphi_i \rangle_{y} \bigr)_{i, i' \in \mathcal I}, \quad \mathbf f = \bigl( \langle \ell, \varphi_i \rangle \bigr)_{i \in \mathcal I}  \,.
\end{equation}
Using regression for the solution as in \eqref{eq:solutionregression} leads to a problem of the same structure, except that $\mathbf A_y$ does not depend on $y$ because the inner product providing the entries of the Gramian can be chosen to be independent of $y$.

It is well known that standard finite element basis functions lead to ill-conditioned discretization matrices $\mathbf A_y$, since they form well-conditioned bases in $L_2$ but generally not in $U$.
This problem can be circumvented by using Riesz bases $\Phi$ (see, for example, \cite{Dacta,DK}), that is, $\varphi_i$ for $i \in \mathcal I$, are required to satisfy
\begin{equation}
   c_\Phi   \norm{\mathbf v}_{\ell_2}  \leq  \Bignorm{ \sum_{i \in \mathcal I} \mathbf v_i \varphi_i }_U  \leq C_\Phi   \norm{\mathbf v}_{\ell_2} .
\end{equation}
Examples of Riesz bases for Sobolev spaces $U$ are wavelet or Fourier bases, as constructed in
\cite{DSchn,DS}  on polygonal domains.
However,  the construction of such bases   becomes   technically more demanding
with increasing geometric complexity of the domain.

The Riesz basis property means that the {\em frame operator}
\begin{equation}\label{eq:F}
  F \colon \ell_2(\mathcal I) \to U, \quad  \mathbf v \mapsto \sum_{i \in \mathcal I} \mathbf v_i \varphi_i
\end{equation}
is an isomorphism from $\ell_2(\mathcal I)$ to $U_\Phi = \overline{\linspan \Phi}$ and 
$\|F\|_{\ell_2\to U}\le C_\Phi$.
This condition can be relaxed by instead allowing $\Phi$ to be {\em overcomplete}, but such that $F$ is still an isomorphism from $\ker(F)^\bot$ to $U_\Phi$, or equivalently, that $F' \colon U_\Phi' \to \ran(F') \subset \ell_2(\mathcal I)$ is an isomorphism. As one can show, this in turn is equivalent to the \emph{frame property}
\begin{equation}\label{eq:frameprop}
 c_\Phi^2  \norm{ h }^2_{U'_\Phi}  \leq \sum_{i \in \mathcal I} \abs{ h(\varphi_i) }^2  \leq  C^2_\Phi  \norm{ h }_{U'_\Phi}^2 \quad\text{for all $h \in U_\Phi'$,}
\end{equation}
which  we adopt here  as a definition of a frame $\Phi$  for $U_\Phi \subset U$. 

We refer to 
$c_\Phi, C_\Phi$ as corresponding frame constants. Such frames are much easier to construct and play a pivotal role in the theory of {\em additive Schwarz methods} addressed below; for an overview, we refer to \cite{Oswald}.

\subsection{Multilevel frames}\label{sec:multilevel}
One way to construct stable frames is based on multilevel structures in finite element methods. Let $W_j \subset H^1(\Omega)$ for $j \in \N$ be a hierarchy of finite element subspaces such that $W_j \subset W_{j+1}$ with standard $H^{1}$-normalized Lagrange basis $\{ \varphi_{j, k} \colon k \in \mathcal I_j \}$ for each $W_j$.
The following is a classical result in the context of additive Schwarz methods; see, for example, \cite{Oswald,BPX,HSS:08}.

\begin{theorem}\label{thm:bpx}
 Let $\norm{ \varphi_{j,k} }_{H^1(\Omega)} \eqsim 1$. Then $\{ \varphi_{j,k} \colon j = 1, \ldots, J , \; k \in \mathcal I_j \}$ is a frame for $W_J \subset H^1(\Omega)$ with frame constants independent of $J$.
\end{theorem}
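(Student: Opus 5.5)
The plan is to establish the frame property \eqref{eq:frameprop} for $U_\Phi = W_J$ with the $H^1$-norm. The first step is to pass from \eqref{eq:frameprop} to the equivalent characterization of the ``synthesis'' side. By the standard Hilbert-space argument (frame property $\Leftrightarrow$ $F$ is bounded with closed range equal to $W_J$), it suffices to show two things:
\begin{equation*}
  \Bignorm{\sum_{j=1}^J\sum_{k\in\mathcal I_j} \mathbf v_{j,k}\varphi_{j,k}}_{H^1}^2 \lesssim \sum_{j,k}\abs{\mathbf v_{j,k}}^2
\end{equation*}
for all coefficient vectors, and, for every $w\in W_J$,
\begin{equation*}
  \inf\Bigl\{ \sum_{j,k}\abs{\mathbf v_{j,k}}^2 : \sum_{j,k}\mathbf v_{j,k}\varphi_{j,k} = w\Bigr\} \lesssim \norm{w}_{H^1}^2 ,
\end{equation*}
with constants independent of $J$. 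These are exactly $C_\Phi$ and $c_\Phi$ in \eqref{eq:frameprop} via $F'$ and duality. Since $\varphi_{j,k}$ is $H^1$-normalized, $\varphi_{j,k} = 2^{-j}\phi_{j,k}$ up to constants, where $\phi_{j,k}$ is the $L_2$-normalized nodal basis on mesh size $h_j\eqsim 2^{-j}$. On each level, $L_2$-stability of the nodal basis gives $\sum_k\abs{\mathbf v_{j,k}}^2 \eqsim 2^{2j}\norm{w_j}_{L_2}^2$ for $w_j=\sum_k \mathbf v_{j,k}\varphi_{j,k}\in W_j$. Both inequalities therefore reduce to the norm equivalence (BPX/Oswald)
\begin{equation*}
  \norm{w}_{H^1}^2 \eqsim \inf_{w=\sum_j w_j,\ w_j\in W_j} \sum_{j=1}^J 2^{2j}\norm{w_j}_{L_2}^2 ,
\end{equation*}
uniformly in $J$.

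For the lower bound on the infimum (the $c_\Phi$ side), I would take the decomposition $w_j = (Q_j - Q_{j-1})w$ with $Q_j$ the $L_2$-orthogonal projector onto $W_j$ (or a quasi-interpolant of Scott--Zhang type), with $Q_0=0$. Using the Jackson estimate $\norm{(I-Q_j)w}_{L_2}\lesssim 2^{-j}\norm{w}_{H^1}$ together with $H^1$-stability of $Q_j$ on quasi-uniform meshes, a standard argument gives
\begin{equation*}
  \sum_j 2^{2j}\norm{(Q_j-Q_{j-1})w}_{L_2}^2 \lesssim \norm{w}_{H^1}^2 .
\end{equation*}
To do this, interpolate between the Jackson estimate for $H^1$ and $H^{1+s}$ ($s<1/2$). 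Alternatively, use a K-functional argument, combining the Jackson estimate with the Bernstein estimate below, and sum a geometric series via a discrete Hardy inequality.

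For the upper bound (the $C_\Phi$ side), I would use the strengthened Cauchy--Schwarz inequality
\begin{equation*}
  \abs{\langle \nabla w_j,\nabla w_l\rangle} \lesssim 2^{-\abs{j-l}/2}\, 2^{j}\norm{w_j}_{L_2}\, 2^{l}\norm{w_l}_{L_2}, \qquad w_j\in W_j,\ w_l\in W_l .
\end{equation*}
This is proven elementwise on level-$\min(j,l)$ elements by integrating by parts on coarse elements, where the coarser function is piecewise polynomial, and using the inverse estimate $\norm{w_j}_{H^1}\lesssim 2^j\norm{w_j}_{L_2}$ plus a trace estimate in a strip of width $2^{-\max(j,l)}$. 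Summing the double series with the Schur test then yields $\norm{\sum_j w_j}_{H^1}^2\lesssim\sum_j 2^{2j}\norm{w_j}_{L_2}^2$.

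The main obstacle is the uniform-in-$J$ control in this summation. The naive triangle inequality loses a factor $J$, and that loss is what the strengthened Cauchy--Schwarz decay $2^{-\abs{j-l}/2}$ removes. On the lower-bound side the corresponding obstacle is avoiding the $\log$ factor from the hierarchical-basis decomposition, which is why I use $L_2$ projections and not interpolants. Finally, I note that on the finite-dimensional space $W_J$ closedness of the range is automatic, so only the uniformity of the constants needs care.
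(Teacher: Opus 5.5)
The paper gives no proof of this theorem. It quotes it as classical (Oswald, BPX, Harbrecht--Schneider--Schwab), and your outline follows the standard argument behind those references. The following parts are correct:
\begin{enumerate}[(a)]
\item the reduction of the frame property to bounded synthesis plus a uniformly stable decomposition;
\item the rescaling $\varphi_{j,k}\eqsim 2^{-j}\times$($L_2$-normalized hat function), which turns this into $\norm{w}_{H^1}^2\eqsim\inf\sum_j 2^{2j}\norm{w_j}_{L_2}^2$;
\item the upper bound via the strengthened Cauchy--Schwarz inequality and the Schur test. The $L_2$-part of the $H^1$-norm is harmless since $\sum_j 2^{-2j}<\infty$.
\end{enumerate}

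The step that does not work as written is the stable splitting. The order-one Jackson estimate together with $H^1$-stability of $Q_j$ only gives $2^{2j}\norm{(Q_j-Q_{j-1})w}_{L_2}^2\lesssim\norm{w}_{H^1}^2$ term by term. Summing produces exactly the factor $J$ you want to avoid. Interpolating between the Jackson estimates for $H^1$ and $H^{1+s}$ does not help either: it yields $\sum_j 2^{2jr}\norm{(Q_j-Q_{j-1})w}_{L_2}^2\lesssim\norm{w}_{H^r}^2$ only for $1<r<1+s$, because $H^1$ is an endpoint of that scale. The Bernstein estimate is also not what drives this direction.

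What is needed is that $H^1$ be an \emph{interior} point between two spaces:
\begin{enumerate}[(a)]
\item $L_2$, where $\norm{I-Q_j}_{L_2\to L_2}\le 1$;
\item some $H^t$ with $t>1$ (your $H^{1+s}$ will do), for which $\norm{(I-Q_j)u}_{L_2}\lesssim 2^{-jt}\norm{u}_{H^t}$.
\end{enumerate}
Then $\norm{(I-Q_j)w}_{L_2}\lesssim K(w,2^{-jt};L_2,H^t)$. Since $(Q_j-Q_{j-1})w=Q_j(I-Q_{j-1})w$ and $Q_0=0$,
\[
  \sum_{j=1}^J 2^{2j}\norm{(Q_j-Q_{j-1})w}_{L_2}^2 \lesssim \sum_{i\ge 0} \bigl(2^{i}K(w,2^{-it};L_2,H^t)\bigr)^2 \lesssim \norm{w}_{(L_2,H^t)_{1/t,2}}^2 \eqsim \norm{w}_{H^1}^2 ,
\]
uniformly in $J$. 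In the Dirichlet case the boundary conditions are built into $H^t$.

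This argument uses $L_2$-boundedness of the projectors. It therefore works with $Q_j$, or any uniformly $L_2$-bounded quasi-interpolant, but not directly with Scott--Zhang interpolants as your parenthetical suggests. With this correction your proof is the classical one.
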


The stiffness matrix with respect to the finest discretization level $J$ reads
\begin{equation}
 \mathbf A_y = \bigl( \langle \mathcal D \varphi_{J,k},  \mathcal D \varphi_{J, k'} \rangle_{y} \bigr)_{k,k' \in \mathcal I_J}. 
\end{equation}
Let  $N = \# \mathcal I_J$ and $\hat N = \sum_{j=1}^J \# \mathcal I_j$. We  denote
by $\mathbf H$ the mapping
 that takes the frame expansion coefficients into the coefficients with respect to the basis of $W_J$, that is, 
\begin{equation}
   \mathbf H \colon \R^{\hat N} \to \R^{N}, \quad \mathbf w = (\mathbf w_{j, k} )_{\substack{j=1,\ldots,J\\ k \in \mathcal I_j}}  \mapsto \mathbf v = (\mathbf v_{k})_{k \in \mathcal I_J}
\end{equation}
is defined by the property that
\[
    \sum_{j=1}^J \sum_{k \in \mathcal I_j} \mathbf w_{j,k} \varphi_{j,k} = \sum_{k \in \mathcal I_J} \mathbf v_k \varphi_{J,k}.
\]
As an immediate consequence of Theorem \ref{thm:bpx}, we obtain
\begin{equation}
\label{AS}
   \operatorname{cond}_2 (\mathbf H^\intercal \mathbf A_y \mathbf H ) \lesssim 1, 
\end{equation}
where $\mathbf H$ is often referred to as Bramble-Pasciak-Xu (BPX) preconditioner \cite{MR1023042,BPX}.

Under mild conditions on the diffusion coefficients, these  concepts lead to solvers for discretized elliptic problems within discretization error accuracy that require {\em linear time}, that is, the computational work is proportional to the system size. More precisely, this holds when discretization errors dominate the effect of floating point arithmetic (see, for example, the discussion in 
\cite{MR4839137}). 

\subsection{Effect of preconditioning on convergence of optimization schemes}
The observations made above on solvers for linear systems in essence carry over to the present context of
optimization.
In fact, using a well-conditioned representation also improves the convergence behaviour of optimization schemes for neural network approximations, as demonstrated in Table~\ref{tab:with:without:preconditioning} (for the definitions of the error measures used, see Section \ref{sec:exp}). However, the same reservations regarding floating point precision as in the solver context apply, which is the issue addressed in the remainder
of the paper. 

\begin{table}[htbp]
\centering
\caption{Comparison of FOSLS with and without preconditioning.}
\label{tab:with:without:preconditioning}
\small
\setlength{\tabcolsep}{4pt}
\begin{tabular}{lccccc}
\toprule
Optimizer & Precond. & Precision & MRE & MSE & Loss \\
\midrule 
SGD & \ding{56} & \texttt{float32} & $7.35\times 10^{-1}$ & $8.53\times 10^{1}$ & $7.50\times 10^{-1}$ \\
 & \ding{56} & \texttt{float64} & $9.35\times 10^{-1}$ & $8.53\times 10^{1}$ & $7.51\times 10^{-1}$ \\
 & \ding{51} & \texttt{float32} & $1.45\times 10^{-1}$ & $2.62$ & $1.06\times 10^{-2}$ \\
 & \ding{51} & \texttt{float64} & $1.44\times 10^{-1}$ & $2.60$ & $1.06\times 10^{-2}$ \\
\midrule
Adam & \ding{56} & \texttt{float32} & $1.59\times 10^{-1}$ & $3.17$ & $1.78\times 10^{-2}$ \\
 & \ding{56} & \texttt{float64} & $1.57\times 10^{-1}$ & $3.09$ & $1.67\times 10^{-2}$ \\
 & \ding{51} & \texttt{float32} & $1.01\times 10^{-3}$ & $1.52\times 10^{-4}$ & $1.19\times 10^{-6}$ \\
 & \ding{51} & \texttt{float64} & $1.26\times 10^{-3}$ & $2.20\times 10^{-4}$ & $1.14\times 10^{-6}$ \\
\midrule
L-BFGS & \ding{56} & \texttt{float32} & $1.37\times 10^{-1}$ & $2.33$ & $1.04\times 10^{-2}$ \\
 & \ding{56} & \texttt{float64} & $1.37\times 10^{-1}$ & $2.32$ & $9.34\times 10^{-3}$ \\
 & \ding{51} & \texttt{float32} & $4.76\times 10^{-3}$ & $3.32\times 10^{-3}$ & $3.43\times 10^{-5}$ \\
 & \ding{51} & \texttt{float64} & $1.33\times 10^{-3}$ & $2.99\times 10^{-4}$ & $2.93\times 10^{-6}$ \\
\midrule
NGD & \ding{56} & \texttt{float32} & $9.65\times 10^{-1}$ & $9.08\times 10^{1}$ & $8.05\times 10^{-1}$ \\
 & \ding{56} & \texttt{float64} & $9.65\times 10^{-1}$ & $9.08\times 10^{1}$ & $8.06\times 10^{-1}$ \\
 & \ding{51} & \texttt{float32} & $3.89\times 10^{-3}$ & $2.38\times 10^{-3}$ & $1.20\times 10^{-5}$ \\
 & \ding{51} & \texttt{float64} & $2.78\times 10^{-3}$ & $1.26\times 10^{-3}$ & $6.08\times 10^{-6}$ \\
\bottomrule
\end{tabular}
\end{table}

\subsection{Effect on random initializations}

In the optimization over a set of neural network parameters $\theta$ in \eqref{eq:residualregression}, also the random initialization of $\theta$ plays an important role. The network weights in $\theta$ are often chosen to be appropriately scaled i.i.d.\ samples of scalar Gaussian distributions.
As illustrated in Figure~\ref{fig:initialization}, using such initializations to parameterize the coefficients of the frame representations in $H^1$ leads to starting values representing substantially more regular functions than with standard finite element basis functions. This means that when using the preconditioned form of the problem, the optimization starts from significantly lower loss values. 

This observation can be interpreted in the context of representations of random fields: whereas linear combinations of standard finite element basis functions with i.i.d.\ standard Gaussian coefficients lead to approximations of white noise, combining such coefficients with frames in $H^1$ yields random fields having the regularity properties of Brownian motion. Although the random parameterization in terms of neural networks shown in Figure \ref{fig:initialization} (here for the full ResNet architecture as described in Section \ref{section:stable:structure} with the initialization described above) is substantially more involved, we observe similar regularity of sample paths.

\begin{figure}[htbp]
\centering
\includegraphics[width=1.0\linewidth]{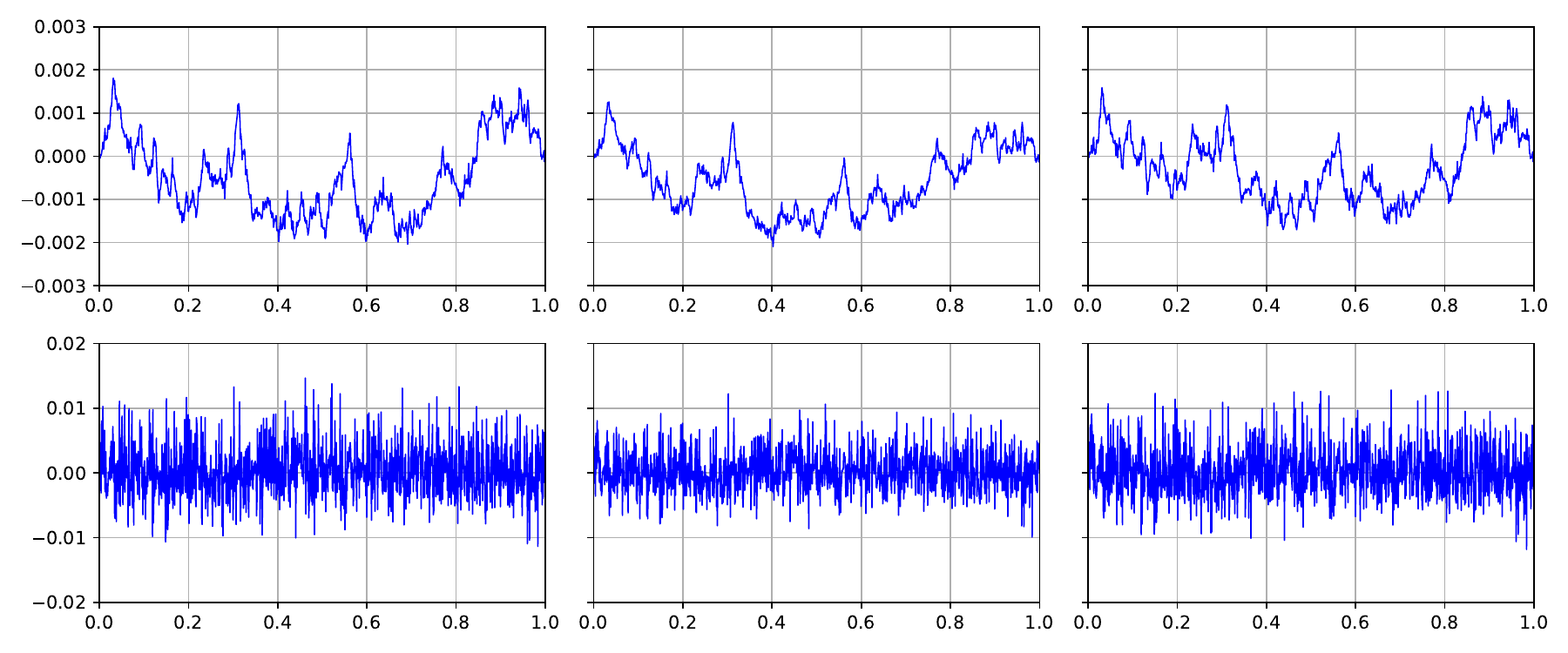}
\caption{Random initialization with and without frame representation. (\textbf{Top}) Initialization with frame representation. (\textbf{Bottom}) Initialization without frame representation.}
\label{fig:initialization}
\end{figure}

\section{Numerical stability of preconditioned problems}

Although the matrix $\mathbf H^\intercal \mathbf A_y \mathbf H$ considered above is well-conditioned, preconditioning in this form is in general not sufficient for numerical stability of methods, see also the discussion in \cite{MR4839137}. While the convergence of iterative methods depending on the condition number may be improved, the achievable accuracy remains limited because when applied in this form, the action of the ill-conditioned matrices $\mathbf A_y$ and $\mathbf H$ still leads to a loss of precision.
In this regard, standard recursive implementations of multilevel preconditioners (with linear costs) correspond to decomposing the well-conditioned symmetrically preconditioned matrix into the product $\mathbf H^\intercal \mathbf A_y \mathbf H$ of individually ill-conditioned matrices.
However, we now consider how such ill-conditioned matrices can be completely avoided not only by operator preconditioning as proposed in \cite{MR4839137}, but also in the context of standard multilevel finite element bases.

In what follows, we assume a family of frames $\Phi_J$ in the trial space $U$ with refinement parameter $J \in \N$ to be given such that for the corresponding constants $c_{\Phi_J} , C_{\Phi_J} >0$ in \eqref{eq:frameprop}, 
\begin{equation}\label{eq:unif}
  \sup_{J \in \N} \frac{C_{\Phi_J}}{c_{\Phi_J}} < \infty.
\end{equation}
As an example, we subsequently consider multilevel frames as in Section \ref{sec:multilevel}, where $J$ corresponds to the maximum discretization level.

\subsection{Stable representations}

In what follows, suppressing the index $J$ and thus writing $\Phi = \Phi_J$, for 
$F$ as in \eqref{eq:F}, we write
 \[ 
  H_\Phi = \ran \mathcal{D}F. 
  \]
Note that since $\mathcal D$ is an operator of vector-valued weak derivatives, when $U_\Phi = \linspan \Phi \subset U$ is a (finite-dimensional) subspace of piecewise polynomials, then also $H_\Phi$ is a subspace of piecewise polynomials. 

\begin{prop}
\label{prop:stab}
Let $\Phi$ satisfy \eqref{eq:unif} and assume that the variational formulation \eqref{eq:generalform} is uniformly stable for $y\in Y$, that is,
 \[
 c_U\|u\|^2_{U}\leq \langle \mathcal Du,\mathcal Du\rangle_y	\leq C_U\|u\|^2_{U}\quad  \forall \,u\in U, \,y\in Y.
 \]
 Then with constants independent of $J$, the following hold true:
\begin{enumerate}[\rm(i)]
\item $\mathcal{D} F  \colon \ell_2(\mathcal{I}) \to  H$
 is uniformly well-conditioned as a mapping from $\ker ( F )^\bot$ to $H_\Phi$.
 \item Moreover, in both \eqref{eq:E} and \eqref{eq:LS}, the bilinear form $(h, \tilde h) \mapsto \langle h, \tilde h\rangle_y$ is bounded and elliptic on $H_\Phi$ uniformly in $y \in Y$.
\end{enumerate}
 \end{prop}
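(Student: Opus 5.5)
The plan is to combine the frame property of $\Phi$ with the fact that $\mathcal D\colon U\to\ran(\mathcal D)$ is an isomorphism.

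\emph{Step 1: $F$ is uniformly well-conditioned on $\ker(F)^\perp$.} By \eqref{eq:frameprop}, the adjoint $F'\colon U_\Phi'\to\ell_2(\mathcal I)$, $h\mapsto (h(\varphi_i))_i$, satisfies
\[
c_\Phi\norm{h}_{U_\Phi'}\le\norm{F'h}_{\ell_2}\le C_\Phi\norm{h}_{U_\Phi'} .
\]
Taking adjoints, $\norm{F}=\norm{F'}\le C_\Phi$. Since $F'$ is bounded below and injective, $F$ is onto $U_\Phi$. Because $\ran(F')$ is closed and equals $\ker(F)^\perp$, the closed range theorem then gives
\[
\norm{F\mathbf v}_U\ge c_\Phi\norm{\mathbf v}_{\ell_2}\quad\text{for all }\mathbf v\in\ker(F)^\perp .
\]
In finite dimensions this is just the singular value statement: the nonzero singular values of $F$ and $F'$ coincide and lie in $[c_\Phi,C_\Phi]$. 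So the condition number of $F|_{\ker(F)^\perp}$ is at most $C_\Phi/c_\Phi$, which is bounded uniformly in $J$ by \eqref{eq:unif}.

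\emph{Step 2: part (i).} Fix any $y$. Using \eqref{stabvar} together with the boundedness of $\langle\cdot,\cdot\rangle_y$ on $H$, we get $\norm{\mathcal Du}_H\eqsim\norm{u}_U$ on $U$. In the two examples this can also be read off directly:
\begin{itemize}
\item for \eqref{eq:E}, $\norm{\nabla u}_{L_2}=\abs{u}_{H^1}$, which is equivalent to $\norm{u}_{H^1_0}$ by Poincar\'e;
\item for \eqref{eq:LS}, $\norm{\mathcal D(v,\tau)}_H^2=\norm{\nabla v}^2+\norm{\tau}^2_{H(\Div)}$.
\end{itemize}
In general the lower bound follows from $\mathcal D$ being an isomorphism onto its range, and the upper bound from boundedness of $\mathcal D$. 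Since $\ker(\mathcal DF)=\ker F$ by injectivity of $\mathcal D$, and $\mathcal DF$ maps $\ker(F)^\perp$ onto $\mathcal D(U_\Phi)=H_\Phi$, composing with Step 1 gives
\[
\norm{\mathcal D F\mathbf v}_H\eqsim\norm{\mathbf v}_{\ell_2}\quad\text{for }\mathbf v\in\ker(F)^\perp ,
\]
with constants $c_\Phi\norm{\mathcal D^{-1}}^{-1}$ and $C_\Phi\norm{\mathcal D}$. Up to the fixed factor $\norm{\mathcal D}\,\norm{\mathcal D^{-1}}$, this is uniform in $J$.

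\emph{Step 3: part (ii).} Boundedness of $\langle\cdot,\cdot\rangle_y$ on all of $H$ is immediate from $a_y\le a_\infty$, respectively $a_y^{-1}\le a_1^{-1}$, and Cauchy--Schwarz, as in Examples \ref{ex:E} and \ref{ex:LS}. For ellipticity on $H_\Phi$, write an element as $h=\mathcal Du$ with $u\in U_\Phi$. Then the stability assumption and the upper bound $\norm{\mathcal Du}_H\lesssim\norm{u}_U$ give
\[
\langle h,h\rangle_y\ge c_U\norm{u}_U^2\gtrsim\norm{h}_H^2 .
\]
The constants depend only on $c_U$ and $\mathcal D$, not on $y$ or $J$.

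The expected obstacle is conceptual rather than technical. In \eqref{eq:LS} the form $\langle\cdot,\cdot\rangle_y$ is \emph{not} elliptic on all of $H$: for instance, $\langle(g,\tau,0),(g,\tau,0)\rangle_y$ vanishes whenever $g=a_y^{-1}\tau$. So one must genuinely restrict to $\ran\mathcal D$, where the three components are coupled. The key care point is to route the lower bound through $U$ via the isomorphism property of $\mathcal D$, rather than trying to bound the form componentwise.
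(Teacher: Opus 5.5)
Your proof is correct. For (ii) it is the paper's argument made explicit. The paper only observes that $H_\Phi\subset\mathcal D(U)$ and that ellipticity on $\mathcal D(U)$ is inherited by the subspace; your chain $\langle h,h\rangle_y\ge c_U\|u\|_U^2\ge c_U\|\mathcal D\|^{-2}\|h\|_H^2$ is exactly that ellipticity.

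For (i) the routes differ. The paper bounds the Rayleigh quotient $\langle\mathcal DF\mathbf v,\mathcal DF\mathbf v\rangle_y/\|\mathbf v\|_{\ell_2}^2$ on $\ker(F)^\perp$ from above by $C_UC_\Phi^2$ and from below by $c_Uc_\Phi^2$. So it measures $\mathcal DF$ in the $y$-dependent energy form and relies on the stability hypothesis. Its bound $C_UC_\Phi^2/(c_Uc_\Phi^2)$ is really the BPX estimate \eqref{AS} for $\mathbf H^\intercal\mathbf A_y\mathbf H$ restricted to $\ker(F)^\perp$.

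You instead measure $\mathcal DF$ in the norm of $H$ and use only the frame bounds and the isomorphism property of $\mathcal D$. Your (i) therefore involves neither $y$ nor $c_U,C_U$; the condition number is at most $(C_\Phi/c_\Phi)\|\mathcal D\|\,\|\mathcal D^{-1}\|$. All $y$-dependence is then isolated in (ii).

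What your split buys is the statement needed directly for $\mathbf D=G\mathcal DF$ in the subsequent corollary, since $G$ is assumed well-conditioned with respect to the norm of $H$. The paper's version gives the condition number of the whole product $\mathbf D^\intercal\mathbf C_y\mathbf D$ in terms of the stability constants in one step. To read it as a statement about $\mathcal DF$ as a map into $H$, however, one has to combine it with (ii).
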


\begin{proof}
Regarding part (i), 
we have that for all $u,v \in \ker{F}^\perp$ we can bound the spectrum of $\mathcal DF$ as follows
\[
\sup_{v\in \ker( F)^\perp}\frac{\langle \mathcal DFv,\mathcal DFv\rangle_y}{\|v\|_U^2}\leq  C_{U}\frac{\| Fv\|^2_{U}}{\|v\|^2_{U}%
} \leq C_{U} \|F\|^2_{\ell_2\to U}\leq C_{U}C^2_{\Phi_J}.
 \]
Denoting by $P^\perp$ the $U$-projection onto $\ker(F)^\perp$, we have   
\[
\inf_{v\in U,\|v\|_U=1}\frac{\langle \mathcal DFv,\mathcal DFv\rangle_y}{\|v\|_U} \geq c_U\frac{ \|F P^\perp v\|^2_{U}}{\|P^\perp v\|^2_{U}} \geq c_{U} c^2_{\Phi_J}.
\]
Hence, we have $\mathrm{cond}((\mathcal D F)'\mathcal DF)\leq \frac{C_UC^2_{\Phi_J}}{c_Uc^2_{\Phi_J}}$.

Concerning (ii), in both cases \eqref{eq:E} and \eqref{eq:LS} we have
 $H_\Phi\subset \mathcal D (U)$, and uniform ellipticity is inherited under restriction to closed subspaces.
 Specifically, in case \eqref{eq:LS}, 
   for all $h\in\mathcal D(U)$, by definition there exist $u\in H^1_0(\Omega)$ and $\tau\in H(\mathrm{div})$ such that $h = (\nabla u,\tau,\mathrm{div}(\tau))$.
\end{proof}

Now let $\{ \Chi_n \colon n \in \mathcal N \}$ with $\mathcal N = \mathcal N_J$, be a frame for $H_\Phi$ and define 
\[
   \mathbf C_y = \bigl(  \langle \Chi_n, \Chi_{n'} \rangle_y  \bigr)_{n,n' \in \mathcal N}.
\]
We assume that $G \colon H_\Phi \to \ell_2(\mathcal N)$ is an injective mapping  that is uniformly in $J$ well-conditioned from $H_\Phi$ to $\ran G$ with the property
\[
      h = \sum_{n \in \mathcal N} (G h)_n \Chi_n \quad \text{for all $h \in H_\Phi$.}
\]
As noted earlier, $H_\Phi$ is a space of piecewise polynomials, so that $G$ can be obtained, for example, by piecewise polynomial interpolation.

Defining now
\[
  \mathbf D := G \mathcal D F,
\]
from the above definitions we directly obtain
\[
   \mathbf H^\intercal \mathbf A_y \mathbf H = \mathbf D^\intercal \mathbf C_y \mathbf D.
\]
The following is a direct consequence of Proposition \ref{prop:stab} and the properties of $G$. 

\begin{cor}
The condition numbers of $\mathbf D$ and $\mathbf C_y$ are bounded uniformly in $J$ and 
 $y\in Y$. 
\end{cor}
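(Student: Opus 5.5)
The plan is to treat $\mathbf D = G\,\mathcal D F$ as a composition of three maps and bound its condition number on $\ker(\mathbf D)^\perp$. The bound comes from the product of the upper constants and the product of the lower constants of the factors restricted to the relevant subspaces. The matrix $\mathbf C_y$ is handled separately, as the Gramian of the frame $\{\Chi_n\}$ with respect to the uniformly elliptic form $\langle\cdot,\cdot\rangle_y$ on $H_\Phi$. Throughout, the condition number of a possibly singular matrix is understood as the ratio of its largest singular value to its smallest nonzero one. Equivalently, it is the ratio of the norm bound to the inverse bound on the orthogonal complement of the kernel.

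For $\mathbf D$, I note first that $\ker \mathbf D = \ker(\mathcal D F)$, since $G$ is injective on $H_\Phi = \ran \mathcal D F$. Also $\ker(\mathcal D F) = \ker F$, since $\mathcal D$ is injective. For $\mathbf v \in \ker(F)^\perp$, Proposition~\ref{prop:stab}(i) gives
\[
 \tilde c\,\|\mathbf v\|_{\ell_2} \le \|\mathcal D F \mathbf v\|_H \le \tilde C\,\|\mathbf v\|_{\ell_2},
\]
with $\tilde c,\tilde C$ independent of $J$. The assumption on $G$ gives $c_G\|h\|_H \le \|Gh\|_{\ell_2}\le C_G\|h\|_H$ for all $h \in H_\Phi$, uniformly in $J$. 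Composing these, $\|\mathbf D\mathbf v\|_{\ell_2}$ is bounded above by $C_G\tilde C\|\mathbf v\|_{\ell_2}$ and below by $c_G\tilde c\|\mathbf v\|_{\ell_2}$ on $\ker(\mathbf D)^\perp$. Hence $\operatorname{cond}_2(\mathbf D)\le C_G\tilde C/(c_G\tilde c)$. This bound is independent of $y$ trivially, since $\mathbf D$ does not depend on $y$.

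For $\mathbf C_y$, let $T\colon \ell_2(\mathcal N)\to H_\Phi$, $\mathbf w\mapsto \sum_n \mathbf w_n\Chi_n$, be the synthesis operator. Then $\langle \mathbf C_y \mathbf w,\mathbf w\rangle = \langle T\mathbf w, T\mathbf w\rangle_y$. By Proposition~\ref{prop:stab}(ii), this quantity lies between $\alpha\|T\mathbf w\|_H^2$ and $\beta\|T\mathbf w\|_H^2$, uniformly in $y$ and $J$. It remains to use that $T$ is uniformly well-conditioned from $\ker(T)^\perp$ onto $H_\Phi$. This follows from the frame property, by the equivalence with \eqref{eq:frameprop} recalled in Section~2 with $H_\Phi$ in place of $U_\Phi$. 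One needs the frame constants of $\{\Chi_n\}$ to have a $J$-uniform ratio. I read the phrase ``frame for $H_\Phi$'' in the sense of \eqref{eq:unif}. Alternatively, this uniformity follows from the existence of the uniformly bounded coefficient map $G$ together with $T G = \mathrm{id}$. Since $\ker \mathbf C_y = \ker T$ (as $\langle\cdot,\cdot\rangle_y$ is elliptic), the Rayleigh quotient of $\mathbf C_y$ on $\ker(\mathbf C_y)^\perp$ is bounded above and below. Thus $\operatorname{cond}_2(\mathbf C_y)\le \beta C_T^2/(\alpha c_T^2)$.

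The main obstacle is bookkeeping rather than analysis, namely making sure the kernels match. In particular, the lower bound for $\mathbf D$ must be taken on $\ker(F)^\perp$ and not on all of $\ell_2(\mathcal I)$. The uniformity of the lower frame bound of $\{\Chi_n\}$ must also be justified. For the latter, I would first try to derive it from $G$: $\|h\|_H^2 = \langle TGh,h\rangle_H = \langle Gh, T^* h\rangle \le C_G\|h\|_H\|T^*h\|$. This gives $\|T^*h\|\ge C_G^{-1}\|h\|_H$. The upper bound on $\|T\|$ would then be taken as part of the frame assumption.
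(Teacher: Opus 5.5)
Your proposal is correct and follows the paper's approach; the paper only remarks that the corollary is ``a direct consequence of Proposition~\ref{prop:stab} and the properties of $G$''. Your argument spells this out: $\operatorname{cond}(\mathbf D)$ is controlled by composing the conditioning of $\mathcal D F$ on $\ker(F)^\perp$ with that of $G$ on $H_\Phi$, and $\mathbf C_y$ is the Gramian of $\{\Chi_n\}$ in the form shown to be uniformly elliptic in Proposition~\ref{prop:stab}(ii). Your kernel bookkeeping, and your point that the frame $\{\Chi_n\}$ needs a $J$-uniform ratio of constants and must lie in $H_\Phi$ for part (ii) to apply on its span, make explicit assumptions the paper leaves implicit.
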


\begin{example}\label{ex:Edecomp}
Consider \eqref{eq:E} for $d=1$ with $\Omega = (0,1)$. Let $\varphi_{j,k}(x) = 2^{-j} \max\{ 0, 1 -  2^j \abs{x - k} \}$ for $k \in \mathcal{I}_j = \{ 1, \ldots, 2^j - 1\}$ for $j \in \N$, so that $\{ \varphi_{j,k}\colon j = 1,\ldots, J, \, k \in \mathcal{I}_j\}$ is a frame for $H^1_0(\Omega)$ with constants independent of $J$ as a consequence of Theorem \ref{thm:bpx}.
Let $\Chi_n$ be the characteristic function of the interval $\Omega_n = 2^{-J}(n-1, n)$ for $n \in \mathcal N_J = \{ 1, \ldots, 2^J\}$ and let $x_n = 2^{-J}(n + \frac12)$.
Then $\{\Chi_n : n\in \mathcal N_J\}$ is a frame for $H_\Phi$, which are piecewise constant functions on level $J$.
We thus obtain
\[
  \mathbf C_y = \biggl( \delta_{n,n'} \int_{\Omega_n} a(x,y) \,dx \biggr)_{n, n' \in \mathcal N_J}, \quad \mathbf D = \bigl( \varphi'_{jk}(x_n) \bigr)_{n \in \mathcal N_J, (j,k)}.
\]
For $d>1$ we can proceed analogously by choosing $\varphi_{j,k}$ as $H^1$-normalized piecewise linear finite elements on uniformly refined triangulations $\mathcal T_j$. Then we can choose $\{ \Chi_n \colon n \in \mathcal N_J\}$ as the basis of $\mathbb P_0(\mathcal T_J)^d$ formed from the characteristic functions of elements.
\end{example}

\begin{example}
For \eqref{eq:LS} in the case $d=1$ with $\Omega=(0,1)$, we have $H(\mathrm{div},\Omega) = H^1(\Omega)$. We can thus obtain a frame in $U = H^1_0(\Omega) \times H(\mathrm{div},\Omega) = H^1_0(\Omega)\times H^1(\Omega)$ by combining frames for the finite element spaces of level $J$ in $H^1_0(\Omega)$ and $H^1(\Omega)$ analogously to Example \eqref{ex:Edecomp}. Here, evaluation of the bilinear form $\langle \cdot, \cdot \rangle_y$ requires integrals of products of piecewise linear functions with the coefficients $a_y^{-1}$. In case that the latter are piecewise constant on the finest grid of level $J$, for example, these can be resolved using evaluation in two Gauss points per element and constructing the frame of $H_\Phi$ by combining vector-valued piecewise constant and piecewise linear functions.
In the case $d>1$, to the best of our knowledge, the construction of uniformly stable 
multilevel frames of $H(\mathrm{div},\Omega)$, that are applicable to spaces of Raviart-Thomas finite elements on hierarchies of triangular meshes, remains open. This question will be investigated in a separate work. 
\end{example}

\subsection{Network structures}\label{section:stable:structure}

With a multilevel frame $\Phi = \Phi_J$ as in Section \ref{sec:multilevel}, we consider approximations of parameter-dependent solutions the form \eqref{eq:hybrid}, 
\[
y \mapsto  \sum_{j=1}^J\sum_{k \in \mathcal{I}_j} \mathbf{u}_{jk}(y ; \theta) \varphi_{j,k},
\]
where the vector $\mathbf{u}(\cdot; \theta)$ is represented by a neural network with trainable parameters $\theta$.

We compare three different architectures,
illustrated in Figure~\ref{figure:network},
 for the neural networks mapping from the parameter space to the frame coefficients. A central element of all three architectures are the so-called \emph{residual networks} (ResNets)~\cite{He2016Deep}. These are widely used in scientific computing due to their favorable numerical stability properties. 

We use residual networks with layers $\Psi_{(A,W,b)} \colon \R^n\to\R^n$, with $n$ to be determined, of the particular form 
\begin{equation}\label{eq:resnetlayer}
	z \,\mapsto\, \Psi_{(A,W,b)}(z) = z+A\sigma( W z + b),
\end{equation}
where $A\in \mathbb R^{n \times r}$, $W\in \mathbb R^{r\times n}$, $b \in\mathbb R^n$, with componentwise application of the activation function $\sigma\colon \R\to\R$. 
We will also call such $\Psi_{(A,W,b)}$ a ResBlock.
Here, as in \cite{BachmayrDahmenOster2024}, we aim for $r \ll n$ to achieve a data-sparse representation, and by analogy to low-rank matrix representations, we will call $r$ the \emph{rank} of the layer.

\textbf{Full low-rank ResNet: } 
In the first architecture we use a fully connected layer \[  \R^{d_p}  \ni p \mapsto \Lambda_0(p) = \sigma(W_0 p + b_0) \in \R^{\hat N}, \] mapping to the full frame space followed by a sequence of ResBlocks, that is,
\[  \mathbf{u}( \cdot; \theta)=\Psi_{(A_{L}, W_{L}, b_{L})} \circ \cdots
\circ  \Psi_{(A_{2}, W_{2}, b_{2})} 
\circ \Psi_{(A_{1}, W_{1}, b_{1})} \circ \Lambda_0. \]

\textbf{Separate low-rank ResNet: } In the second architecture, we exploit the multilevel structure of the frame itself by using a distinct ResNet with initial fully connected layer for each frame level.
In the case \eqref{eq:E}, for the coefficients on each level $j$ we use a separate ResNet representation of the form
\[  \mathbf{u}_{j}( \cdot; \theta) = \Psi_{(A_{j,L_j}, W_{j,L_j}, b_{j,L_j})} \circ \cdots
\circ  \Psi_{(A_{j,2}, W_{j,2}, b_{j,2})} 
\circ \Psi_{(A_{j,1}, W_{j,1}, b_{j,1})} \circ \Lambda^j_0, \]
and $\mathbf u$ is obtained by concatenating the vectors $\mathbf u_j$ for $j \leq J$.
In~\eqref{eq:LS}, we parameterize the solution components $\sigma$ and $u$ by two independent separate ResNets.

\textbf{Separate frame representation: }
With $n_j = \#\mathcal{I}_j$, let the (sparse) prolongation matrices $P_j\in\mathbb R^{n_j \times n_{j-1}}$ be given for $j = 1,\ldots,J$ by
\begin{equation}\label{eq:prolongationdef}
   \sum_{k \in \mathcal{I}_{j-1}} \mathbf{v}_k \varphi_{j-1,k} =
   \sum_{i \in \mathcal{I}_{j}} (P_j \mathbf{v})_k \varphi_{j,k}
   , \quad \mathbf{v} \in  \R^{n_{j-1}}\,.
\end{equation}
We define \emph{prolongation layers} as $\Pi_{P_{j}} \colon \R^{n_{j-1}} \to \R^{n_j}, \, z\mapsto P_{j} z$.

Starting from a fully connected layer $\Lambda_0$ given by
\[  \R^{d_p}  \ni p \mapsto \Lambda_0(p) = \sigma(W_0 p + b_0) \in \R^{n_1}, \]
the resulting neural networks are composed of layers $\Lambda_j \colon \R^{n_j} \to \R^{n_j}$ of the form
\begin{equation}\label{eq:Lambdaj}
   \Lambda_j = \Psi_{(A_{j,L_j}, W_{j,L_j}, b_{j,L_j})} \circ \cdots
   \circ  \Psi_{(A_{j,2}, W_{j,2}, b_{j,2})} 
   \circ \Psi_{(A_{j,1}, W_{j,1}, b_{j,1})} \circ\Pi_{P_j}
\end{equation}
with some $L_j \in \N$, for $j = 1,\ldots,J$. 
In case \eqref{eq:E}, for each level $j$, the composition of such $\Lambda_0,\ldots, \Lambda_j$ parameterizes the frame coefficients $\mathbf u_j$ for this level,
\[
\mathbf{u}_{j}( \cdot; \theta_j)
= \Lambda_j \circ \cdots \circ \Lambda_1 \circ \Lambda_0,
\]
with trainable parameters that are separate for each $j$,
\begin{multline*}
   \theta_j = (A_{j,L_j}^{(j)}, W_{j,L_j}^{(j)}, b_{j,L_j}^{(j)}, \ldots, A_{j,1}^{(j)}, W_{j,1}^{(j)}, b_{j,1}^{(j)}, \ldots, \\ 
   \ldots, A_{1,L_1}^{(j)}, W_{1,L_1}^{(j)}, b_{1,L_1}^{(j)}, \ldots, A_{1,1}^{(j)}, W_{1,1}^{(j)}, b_{1,1}^{(j)}, W_0^{(j)}, b_0^{(j)})  .
\end{multline*}
This corresponds to a levelwise application of the format used previously in \cite{BachmayrDahmenOster2024}.
In~\eqref{eq:LS}, we again parameterize $\sigma$ and $u$ by two independent separate frame representations.

\begin{figure}

\begin{tikzpicture}[
  every node/.style={draw, align=center},
  input/.style={fill=black!30, line width=0.75},
  linear/.style={fill=orange!30, line width=0.75},
  resblock/.style={fill=teal!25, line width=0.75},
  output/.style={fill=blue!20, line width=0.75}
]

\node[input, minimum height=1.0cm, minimum width=0.25cm] (input) {};
\node[linear, minimum height=3.5cm, minimum width=0.25cm, right=1.0cm of input] (linear) {};
\node[resblock, minimum height=3.5cm, minimum width=0.25cm, right=0.5cm of linear] (r1) {};
\node[resblock, minimum height=3.5cm, minimum width=0.25cm, right=0.5cm of r1] (r2) {};
\node[resblock, minimum height=3.5cm, minimum width=0.25cm, right=0.5cm of r2] (r3) {};
\node[resblock, minimum height=3.5cm, minimum width=0.25cm, right=0.5cm of r3] (r4) {};
\node[resblock, minimum height=3.5cm, minimum width=0.25cm, right=0.5cm of r4] (r5) {};
\node[resblock, minimum height=3.5cm, minimum width=0.25cm, right=0.5cm of r5] (r6) {};
\node[resblock, minimum height=3.5cm, minimum width=0.25cm, right=0.5cm of r6] (r7) {};
\node[resblock, minimum height=3.5cm, minimum width=0.25cm, right=0.5cm of r7] (r8) {};

\draw[-Latex, line width=0.75] (input) -- (linear);
\draw[-Latex, line width=0.75] (linear) -- (r1);
\draw[-Latex, line width=0.75] (r1) -- (r2);
\draw[-Latex, line width=0.75] (r2) -- (r3);
\draw[-Latex, line width=0.75] (r3) -- (r4);
\draw[-Latex, line width=0.75] (r4) -- (r5);
\draw[-Latex, line width=0.75] (r5) -- (r6);
\draw[-Latex, line width=0.75] (r6) -- (r7);
\draw[-Latex, line width=0.75] (r7) -- (r8);

\node[output, minimum height=3.5cm, minimum width=0.25cm, right=8.25cm of r4] (output) {};

\draw[-Latex, line width=0.75] (r8) -- (output);
\end{tikzpicture}

\vspace{20pt}

\begin{tikzpicture}[
  every node/.style={draw, align=center},
  input/.style={fill=black!30, line width=0.75},
  linear/.style={fill=orange!30, line width=0.75},
  resblock/.style={fill=teal!25, line width=0.75},
  output/.style={fill=blue!20, line width=0.75}
]

\node[linear, minimum height=0.5cm, minimum width=0.25cm] (linear1) {};
\node[resblock, minimum height=0.5cm, minimum width=0.25cm, right=0.5cm of linear1] (r11) {};
\node[resblock, minimum height=0.5cm, minimum width=0.25cm, right=0.5cm of r11] (r12) {};
\node[resblock, minimum height=0.5cm, minimum width=0.25cm, right=0.5cm of r12] (r13) {};
\node[resblock, minimum height=0.5cm, minimum width=0.25cm, right=0.5cm of r13] (r14) {};
\node[resblock, minimum height=0.5cm, minimum width=0.25cm, right=0.5cm of r14] (r15) {};
\node[resblock, minimum height=0.5cm, minimum width=0.25cm, right=0.5cm of r15] (r16) {};
\node[resblock, minimum height=0.5cm, minimum width=0.25cm, right=0.5cm of r16] (r17) {};
\node[resblock, minimum height=0.5cm, minimum width=0.25cm, right=0.5cm of r17] (r18) {};
\draw[-Latex, line width=0.75] (linear1) -- (r11);
\draw[-Latex, line width=0.75] (r11) -- (r12);
\draw[-Latex, line width=0.75] (r12) -- (r13);
\draw[-Latex, line width=0.75] (r13) -- (r14);
\draw[-Latex, line width=0.75] (r14) -- (r15);
\draw[-Latex, line width=0.75] (r15) -- (r16);
\draw[-Latex, line width=0.75] (r16) -- (r17);
\draw[-Latex, line width=0.75] (r17) -- (r18);

\node[linear, minimum height=1.0cm, minimum width=0.25cm, below=0.0cm of linear1] (linear2) {};
\node[resblock, minimum height=1.0cm, minimum width=0.25cm, right=0.5cm of linear2] (r21) {};
\node[resblock, minimum height=1.0cm, minimum width=0.25cm, right=0.5cm of r21] (r22) {};
\node[resblock, minimum height=1.0cm, minimum width=0.25cm, right=0.5cm of r22] (r23) {};
\node[resblock, minimum height=1.0cm, minimum width=0.25cm, right=0.5cm of r23] (r24) {};
\node[resblock, minimum height=1.0cm, minimum width=0.25cm, right=0.5cm of r24] (r25) {};
\node[resblock, minimum height=1.0cm, minimum width=0.25cm, right=0.5cm of r25] (r26) {};
\node[resblock, minimum height=1.0cm, minimum width=0.25cm, right=0.5cm of r26] (r27) {};
\node[resblock, minimum height=1.0cm, minimum width=0.25cm, right=0.5cm of r27] (r28) {};
\draw[-Latex, line width=0.75] (linear2) -- (r21);
\draw[-Latex, line width=0.75] (r21) -- (r22);
\draw[-Latex, line width=0.75] (r22) -- (r23);
\draw[-Latex, line width=0.75] (r23) -- (r24);
\draw[-Latex, line width=0.75] (r24) -- (r25);
\draw[-Latex, line width=0.75] (r25) -- (r26);
\draw[-Latex, line width=0.75] (r26) -- (r27);
\draw[-Latex, line width=0.75] (r27) -- (r28);

\node[linear, minimum height=2.0cm, minimum width=0.25cm, below=0.0cm of linear2] (linear3) {};
\node[resblock, minimum height=2.0cm, minimum width=0.25cm, right=0.5cm of linear3] (r31) {};
\node[resblock, minimum height=2.0cm, minimum width=0.25cm, right=0.5cm of r31] (r32) {};
\node[resblock, minimum height=2.0cm, minimum width=0.25cm, right=0.5cm of r32] (r33) {};
\node[resblock, minimum height=2.0cm, minimum width=0.25cm, right=0.5cm of r33] (r34) {};
\node[resblock, minimum height=2.0cm, minimum width=0.25cm, right=0.5cm of r34] (r35) {};
\node[resblock, minimum height=2.0cm, minimum width=0.25cm, right=0.5cm of r35] (r36) {};
\node[resblock, minimum height=2.0cm, minimum width=0.25cm, right=0.5cm of r36] (r37) {};
\node[resblock, minimum height=2.0cm, minimum width=0.25cm, right=0.5cm of r37] (r38) {};
\draw[-Latex, line width=0.75] (linear3) -- (r31);
\draw[-Latex, line width=0.75] (r31) -- (r32);
\draw[-Latex, line width=0.75] (r32) -- (r33);
\draw[-Latex, line width=0.75] (r33) -- (r34);
\draw[-Latex, line width=0.75] (r34) -- (r35);
\draw[-Latex, line width=0.75] (r35) -- (r36);
\draw[-Latex, line width=0.75] (r36) -- (r37);
\draw[-Latex, line width=0.75] (r37) -- (r38);

\node[output, minimum height=2.0cm, minimum width=0.25cm, right=8.25cm of r34] (output3) {};
\node[output, minimum height=1.0cm, minimum width=0.25cm, above=0.0cm of output3] (output2) {};
\node[output, minimum height=0.5cm, minimum width=0.25cm, above=0.0cm of output2] (output1) {};

\draw[-Latex, line width=0.75] (r18) -- (output1);
\draw[-Latex, line width=0.75] (r28) -- (output2);
\draw[-Latex, line width=0.75] (r38) -- (output3);

\node[input, minimum height=1.0cm, minimum width=0.25cm, left=1.0cm of linear2] (input) {};
\draw[-, line width=0.75] (input.east) -- ([xshift=-0.5cm]linear2.west);
\draw[-Latex, line width=0.75] ([xshift=-0.5cm]linear2.west) |- (linear1.west);
\draw[-Latex, line width=0.75] ([xshift=-0.5cm]linear2.west) -- (linear2.west);
\draw[-Latex, line width=0.75] ([xshift=-0.5cm]linear2.west) |- (linear3.west);

\end{tikzpicture}

\vspace{20pt}

\begin{tikzpicture}[
  every node/.style={draw, align=center},
  input/.style={fill=black!30, line width=0.75},
  linear/.style={fill=orange!30, line width=0.75},
  resblock/.style={fill=teal!25, line width=0.75},
  output/.style={fill=blue!20, line width=0.75}
]

\tikzset{
  pics/prolongation1/.style={
    code={
      \def\L{0}        %
      \def\R{0.25}      %
      \def\Hl{0.25}     %
      \def\Hr{0.5}    %
      \draw[fill=red!25, draw, line width=0.75]
        (\L,-\Hl) --
        (\L, \Hl) --
        (\R, \Hr) --
        (\R,-\Hr) -- cycle;
      \path (\L,0) coordinate (-west);
      \path (\R,0) coordinate (-east);
      \path ({(\L+\R)/2},0) coordinate (-center);
    }
  }
}

\tikzset{
  pics/prolongation2/.style={
    code={
      \def\L{0}        %
      \def\R{0.25}      %
      \def\Hl{0.5}     %
      \def\Hr{1.0}    %
      \draw[fill=red!25, draw, line width=0.75]
        (\L,-\Hl) --
        (\L, \Hl) --
        (\R, \Hr) --
        (\R,-\Hr) -- cycle;
      \path (\L,0) coordinate (-west);
      \path (\R,0) coordinate (-east);
      \path ({(\L+\R)/2},0) coordinate (-center);
    }
  }
}

\node[linear, minimum height=0.5cm, minimum width=0.25cm] (linear1) {};
\node[resblock, minimum height=0.5cm, minimum width=0.25cm, right=0.5cm of linear1] (r11) {};
\node[resblock, minimum height=0.5cm, minimum width=0.25cm, right=0.5cm of r11] (r12) {};
\node[resblock, minimum height=0.5cm, minimum width=0.25cm, right=0.5cm of r12] (r13) {};
\node[resblock, minimum height=0.5cm, minimum width=0.25cm, right=0.5cm of r13] (r14) {};

\draw[-Latex, line width=0.75] (linear1) -- (r11);
\draw[-Latex, line width=0.75] (r11) -- (r12);
\draw[-Latex, line width=0.75] (r12) -- (r13);
\draw[-Latex, line width=0.75] (r13) -- (r14);

\node[linear, minimum height=0.5cm, minimum width=0.25cm, below=0.25cm of linear1] (linear2) {};
\node[resblock, minimum height=0.5cm, minimum width=0.25cm, right=0.5cm of linear2] (r21) {};
\node[resblock, minimum height=0.5cm, minimum width=0.25cm, right=0.5cm of r21] (r22) {};
\node[resblock, minimum height=0.5cm, minimum width=0.25cm, right=0.5cm of r22] (r23) {};
\node[resblock, minimum height=0.5cm, minimum width=0.25cm, right=0.5cm of r23] (r24) {};
\pic[right=0.5cm of r24] (p21) {prolongation1};
\node[resblock, minimum height=1.0cm, minimum width=0.25cm, right=0.5cm of p21-east] (r25) {};
\node[resblock, minimum height=1.0cm, minimum width=0.25cm, right=0.5cm of r25] (r26) {};
\node[resblock, minimum height=1.0cm, minimum width=0.25cm, right=0.5cm of r26] (r27) {};
\node[resblock, minimum height=1.0cm, minimum width=0.25cm, right=0.5cm of r27] (r28) {};

\draw[-Latex, line width=0.75] (linear2) -- (r21);
\draw[-Latex, line width=0.75] (r21) -- (r22);
\draw[-Latex, line width=0.75] (r22) -- (r23);
\draw[-Latex, line width=0.75] (r23) -- (r24);
\draw[-Latex, line width=0.75] (r24) -- (p21-west);
\draw[-Latex, line width=0.75] (p21-east) -- (r25);
\draw[-Latex, line width=0.75] (r25) -- (r26);
\draw[-Latex, line width=0.75] (r26) -- (r27);
\draw[-Latex, line width=0.75] (r27) -- (r28);

\node[linear, minimum height=0.5cm, minimum width=0.25cm, below=1.0cm of linear2] (linear3) {};
\node[resblock, minimum height=0.5cm, minimum width=0.25cm, right=0.5cm of linear3] (r31) {};
\node[resblock, minimum height=0.5cm, minimum width=0.25cm, right=0.5cm of r31] (r32) {};
\node[resblock, minimum height=0.5cm, minimum width=0.25cm, right=0.5cm of r32] (r33) {};
\node[resblock, minimum height=0.5cm, minimum width=0.25cm, right=0.5cm of r33] (r34) {};
\pic[right=0.5cm of r34] (p31) {prolongation1};
\node[resblock, minimum height=1.0cm, minimum width=0.25cm, right=0.5cm of p31-east] (r35) {};
\node[resblock, minimum height=1.0cm, minimum width=0.25cm, right=0.5cm of r35] (r36) {};
\node[resblock, minimum height=1.0cm, minimum width=0.25cm, right=0.5cm of r36] (r37) {};
\node[resblock, minimum height=1.0cm, minimum width=0.25cm, right=0.5cm of r37] (r38) {};
\pic[right=0.5cm of r38] (p32) {prolongation2};
\node[resblock, minimum height=2.0cm, minimum width=0.25cm, right=0.5cm of p32-east] (r39) {};
\node[resblock, minimum height=2.0cm, minimum width=0.25cm, right=0.5cm of r39] (r310) {};
\node[resblock, minimum height=2.0cm, minimum width=0.25cm, right=0.5cm of r310] (r311) {};
\node[resblock, minimum height=2.0cm, minimum width=0.25cm, right=0.5cm of r311] (r312) {};

\draw[-Latex, line width=0.75] (linear3) -- (r31);
\draw[-Latex, line width=0.75] (r31) -- (r32);
\draw[-Latex, line width=0.75] (r32) -- (r33);
\draw[-Latex, line width=0.75] (r33) -- (r34);
\draw[-Latex, line width=0.75] (r34) -- (p31-west);
\draw[-Latex, line width=0.75] (p31-east) -- (r35);
\draw[-Latex, line width=0.75] (r35) -- (r36);
\draw[-Latex, line width=0.75] (r36) -- (r37);
\draw[-Latex, line width=0.75] (r37) -- (r38);
\draw[-Latex, line width=0.75] (r38) -- (p32-west);
\draw[-Latex, line width=0.75] (p32-east) -- (r39);
\draw[-Latex, line width=0.75] (r39) -- (r310);
\draw[-Latex, line width=0.75] (r310) -- (r311);
\draw[-Latex, line width=0.75] (r311) -- (r312);

\node[output, minimum height=2.0cm, minimum width=0.25cm, right=0.5cm of r312] (output3) {};
\node[output, minimum height=1.0cm, minimum width=0.25cm, above=0.0cm of output3] (output2) {};
\node[output, minimum height=0.5cm, minimum width=0.25cm, above=0.0cm of output2] (output1) {};

\draw[-Latex, line width=0.75] (r14) -- (output1);
\draw[-Latex, line width=0.75] (r28) -- (output2);
\draw[-Latex, line width=0.75] (r312) -- (output3);

\node[input, minimum height=1.0cm, minimum width=0.25cm, left=1.0cm of linear2] (input) {};
\draw[-, line width=0.75] (input.east) -- ([xshift=-0.5cm]linear2.west);
\draw[-Latex, line width=0.75] ([xshift=-0.5cm]linear2.west) |- (linear1.west);
\draw[-Latex, line width=0.75] ([xshift=-0.5cm]linear2.west) -- (linear2.west);
\draw[-Latex, line width=0.75] ([xshift=-0.5cm]linear2.west) |- (linear3.west);

\end{tikzpicture}

\vspace{10pt}

\begin{tikzpicture}[
every node/.style={draw, align=center},
input/.style={fill=black!30, line width=0.75},
linear/.style={fill=orange!30, line width=0.75},
resblock/.style={fill=teal!25, line width=0.75},
output/.style={fill=blue!20, line width=0.75}
]

\node[input, minimum height=0.35cm, minimum width=0.35cm] (i) {};
\node[right=0.1cm of i, draw=none] {Input parameters};
\node[output, minimum height=0.35cm, minimum width=0.35cm, right=4.0cm of i] (o) {};
\node[right=0.1cm of o, draw=none] {Output frame coefficients};

\node[linear, minimum height=0.35cm, minimum width=0.35cm, below=0.25cm of i] (l) {};
\node[right=0.1cm of l, draw=none] {Linear layer};
\node[resblock, minimum height=0.35cm, minimum width=0.35cm, right=4.0cm of l] (r) {};
\node[right=0.1cm of r, draw=none] {Low-rank ResBlock};
\node[fill=red!25, minimum height=0.35cm, minimum width=0.35cm, right=4.0cm of r, line width=0.75] (p) {};
\node[right=0.1cm of p, draw=none] {Prolongation};

\end{tikzpicture}

\caption{Different neural network architectures mapping from the parameter space to the frame coefficients. Without loss of generality, we take three-levels of frame as an example. \textbf{(Top)} Full low-rank ResNet.  \textbf{(Middle)} Separate low-rank ResNet. \textbf{(Bottom)} Separate frame representation.}
\label{figure:network}
\end{figure}
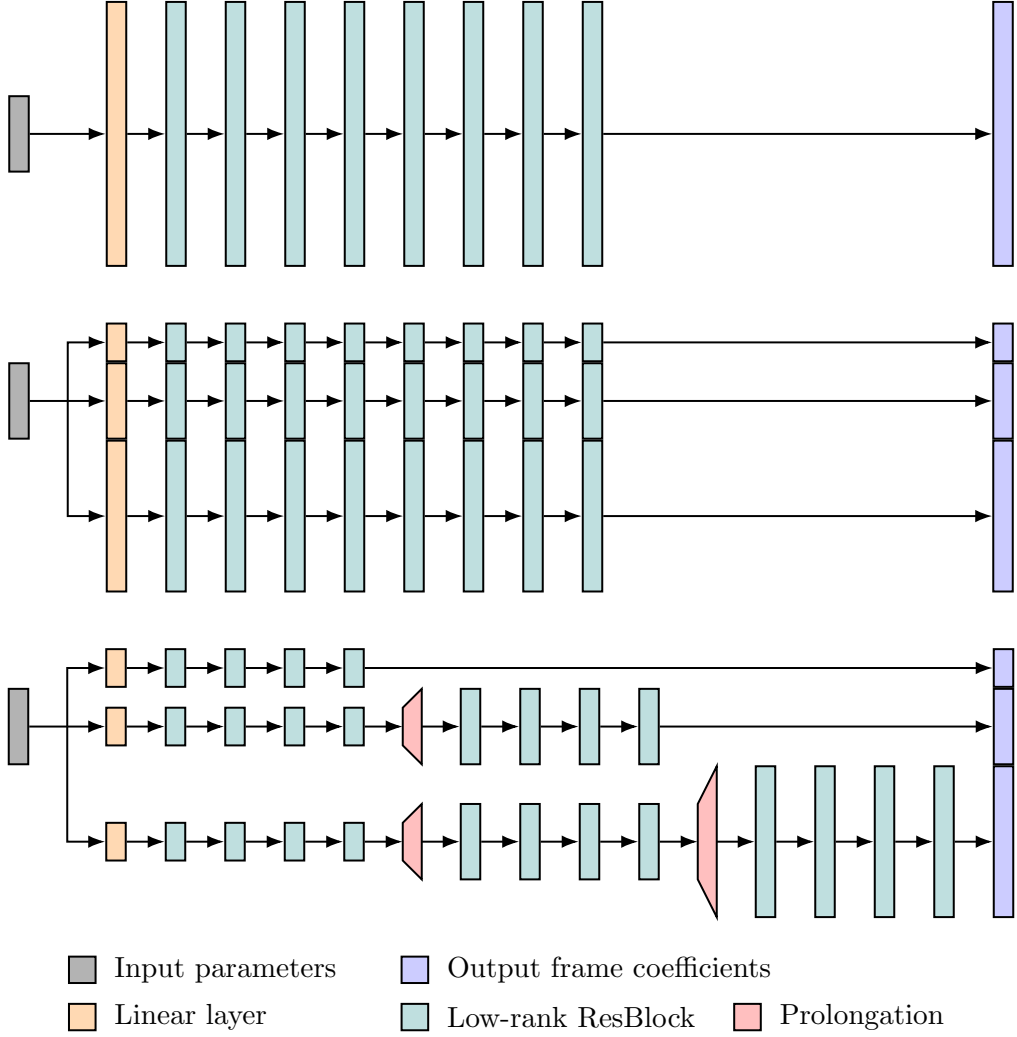

\subsection{Matrix-free numerical realization}

In the variational formulation~\eqref{eq:residualregression}, instead of assembling the stiffness matrix $\mathbf{A}_{y}$ explicitly, we employ a matrix-free evaluation of matrix-vector products. 
We again assume a multilevel frame $\Phi = \Phi_J$ as in Section \ref{sec:multilevel} corresponding to the mesh $\mathcal T_J$ on level $J$.
In particular, for $j=1,\ldots,J$, we have a basis $\{ \varphi_{j,k} \colon k \in \mathcal{I}_j \}$ of the respective finite element space on level $j$. 

We consider the variational formulation on the mesh $\mathcal T_J = \{ T_1, \ldots, T_{\# \mathcal T_J} \}$. 
For each element $T_k$, the local-to-global mapping is given by an index set 
\[
I_{k}\coloneq\{i_{k}^{1},i_{k}^{2},\ldots,i_{k}^{m}\},
\]
where $m$ denotes the number of local degrees of freedom (e.g., $m=2$ for one-dimensional linear elements and $m=3$ for triangular elements in two dimensions). 
On this element, we define the local stiffness matrix
\[
\mathbf{A}_{y}^{k}\coloneq 
\bigl( \langle \mathcal{D}\varphi_{J,\ell},  \mathcal{D}\varphi_{J,\ell^{\prime}} \rangle_{y} \bigr)_{\ell,\ell^{\prime}\in I_{k}}
\in\mathbb{R}^{m\times m},
\]
where $\mathcal{D}$ denotes the differential operator associated with the variational formulation.

Given a global solution vector $\mathbf{u}\in\mathbb{R}^{\#\mathcal{I}_{J}}$, we gather the corresponding local subvector
\[
\mathbf{u}^{k} \coloneq (u_{i_{k}^{1}},u_{i_{k}^{2}},\ldots,u_{i_{k}^{m}})^{\intercal}\in\mathbb{R}^{m}.
\]
The local contributions are then assembled into the global vector via
\[
\bigl(\mathbf{A}_{y}\mathbf{u}\bigr)_{i}
=\sum_{k: i\in I_{k}}
\bigl(\mathbf{A}_{y}^{k}\mathbf{u}^{k}\bigr)_{\mathrm{loc}(i,T_{k})},
\]
where $\mathrm{loc}(i,T_{k})$ denotes the local index of the global degree of freedom $i$ within element $T_{k}$, i.e., $i$ is the $\mathrm{loc}(i,T_{k})$-th entry of $I_{k}$. 
This matrix-vector product allows us to evaluate the variational formulation~\eqref{eq:residualregression} without explicitly forming the global stiffness matrix.

Moreover, the elementwise products $\{\mathbf{A}_{y}^{k}\mathbf{u}^{k}\}_{k\in\mathcal{I}_{J}}$ can be computed in a vectorized manner. 
Specifically, 
\[
\widetilde{\mathbf{A}}_{y}
\coloneq 
\begin{bmatrix}
\mathbf{A}_{y}^{1} \\
\vdots \\
\mathbf{A}_{y}^{\#\mathcal{I}_{J}}
\end{bmatrix}\in\mathbb{R}^{\#\mathcal{I}_{J}\times m\times m},
\qquad
\widetilde{\mathbf{u}}
\coloneq
\begin{bmatrix}
\mathbf{u}^{1} \\ \vdots \\ \mathbf{u}^{\#\mathcal{I}_{J}}
\end{bmatrix}\in\mathbb{R}^{\#\mathcal{I}_{J}\times m},
\]
such that the product in each element can be computed all-at-once by a tensor contraction:
\[
\bigl(\widetilde{\mathbf{A}}_{y}\bigr)_{k\alpha\beta}\bigl(\widetilde{\mathbf{u}}\bigr)_{k\alpha}
=
\begin{bmatrix}
\mathbf{A}_{y}^{1}\mathbf{u}^{1} \\ \vdots \\ \mathbf{A}_{y}^{\#\mathcal{I}_{J}}\mathbf{u}^{\#\mathcal{I}_{J}}
\end{bmatrix}\in\mathbb{R}^{\#\mathcal{I}_{J}\times m}.
\]
This formulation naturally extends to batched parameters $y$, enabling simultaneous evaluation of stiffness matrix-vector products for multiple parameter instances.
Consequently, the matrix-free assembly procedure significantly reduces memory consumption and replaces sparse matrix-vector products by a batched dense tensor contraction, which are more efficient on GPUs.

For evaluating the product $\mathbf{H}\mathbf{w}$ for a given vector $\mathbf{w}$, we utilize the standard recursive implementation of multilevel preconditioners.
Combining this with the above routines, yields a matrix-free numerical realization of the action of $\mathbf H^\intercal \mathbf A_y \mathbf H$.

Finally, we now turn to the matrix-free realization of the numerically stable decomposition $\mathbf D^\intercal \mathbf C_y \mathbf D$.
Let $w_{n}^{q},x_{n}^{q}$ for $q=1,\dots, Q_n$ be Gauss quadrature weights and points corresponding to the triangle $T_{n}$ such that quadratic functions are exactly integrated on the triangles. Suppose $a_y$ is piecewise constant on this triangulation.

For a given coefficient vector $\mathbf{w}$, we introduce its subdivision into level components,
\begin{equation*}
\mathbf{w}\coloneq (\mathbf{w}_{1},\ldots,\mathbf{w}_{J}), \quad \mathbf{w}_{j}\in\mathbb{R}^{\#\mathcal{I}_{j}}.
\end{equation*}
Then 
\begin{equation*}
\mathcal{D}F(\mathbf{w})=\mathcal{D}\Bigg(\sum_{j=1}^{J}\sum_{k\in\mathcal{I}_{j}}w_{j,k}\varphi_{j,k}\Bigg)=\sum_{j=1}^{J}\sum_{k\in\mathcal{I}_{j}}\mathbf{w}_{j,k}\mathcal{D}\varphi_{j,k}.
\end{equation*}
Consequently,
\begin{align*}
\langle \mathcal{D}F(\mathbf{w}),\mathcal{D}F(\mathbf{w}) \rangle_{y}
&=\sum_{j,j^{\prime}=1}^{J}\sum_{k,k^{\prime}\in\mathcal{I}_{j}}\mathbf{w}_{j,k}\mathbf{w}_{j^{\prime},k^{\prime}}\langle\mathcal{D}\varphi_{j,k},\mathcal{D}\varphi_{j^{\prime},k^{\prime}}\rangle_{y} \\
&=\sum_{j,j^{\prime}=1}^{J}\sum_{k,k^{\prime}\in\mathcal{I}_{j}}\mathbf{w}_{j,k}\mathbf{w}_{j^{\prime},k^{\prime}} \sum_{n=1}^{N_{J}}\sum_{q=1}^{Q}w_{n}^{q}a_{y}(x_{n}^{q})\langle\mathcal{D}\varphi_{j,k}(x_{n}^{q}),\mathcal{D}\varphi_{j^{\prime},k^{\prime}}(x_{n}^{q})\rangle_{\ell_{2}}.
\end{align*}
Therefore, the $(q,n)$-th row of the matrix $\mathbf{D}$ reads
\begin{equation*}
\mathbf{D}_{q,n} \coloneq 
\begin{bmatrix}
\underbrace{\mathcal{D}\varphi_{1,1}(x_{n}^{q}),\ldots,\mathcal{D}\varphi_{1,\#\mathcal{I}_{1}}(x_{n}^{q})}_{\text{level 1}} & \cdots & \underbrace{\mathcal{D}\varphi_{J,1}(x_{n}^{q}),\ldots,\mathcal{D}\varphi_{J,\#\mathcal{I}_{J}}(x_{n}^{q})}_{\text{level $J$}}
\end{bmatrix}.
\end{equation*}

In practice, it is not necessary to precompute or store the sparse matrix $\mathbf{D}$. Instead, one can evaluate the vector inner products on the fly,
\begin{equation*}
\langle \mathbf{D}_{q,n}, \mathbf{w} \rangle_{\ell_{2}}
= \sum_{j=1}^{J} \sum_{k \in \mathcal{I}_{j}}
\Big\langle 
\underbrace{(\mathcal{D}\varphi_{j,k}(x_{n}^{q}), \ldots, \mathcal{D}\varphi_{j,\#\mathcal{I}_{j}}(x_{n}^{q}))}_{\mathbf{D}_{q,n,j}},
\mathbf{w}_{j}
\Big\rangle_{\ell_{2}},
\end{equation*}
which can be computed efficiently. Specifically, suppose that the quadrature point $x_{n}^{q}$ lies in an element $T_{j,k^{*}}$ at level $j$. The associated local-to-global index mapping is given by
\begin{equation*}
I_{j,k^{*}} \coloneq \{ i_{k^{*}}^{1}, i_{k^{*}}^{2}, \ldots, i_{k^{*}}^{m} \}.
\end{equation*}
We gather the corresponding local subvectors of $\mathbf{D}_{q,n,j}$ and $\mathbf{w}_{j}$, and compute their inner product. To vectorize this procedure, we apply the gather operator over all $(q,n)$ simultaneously. A subsequent tensor contraction then yields the desired product $\mathbf{D}\mathbf{w}$.

\subsection{Experimental settings}\label{sec:exp}
In our tests, we restrict ourselves to a one-dimensional parametric elliptic equation, since the effects that are relevant for this work do not differ  in higher-dimensional cases. The diffusion field $a_{y}$ is defined as 
\begin{equation*}
a_{y}=y_{1}\chi_{[0,1/4)}+y_{2}\chi_{[1/4,1/2)}+y_{3}\chi_{[1/2,3/4)}+y_{4}\chi_{[3/4,1]},
\end{equation*}
where $y \coloneq (y_{1},y_{2},y_{3},y_{4}) \in[0.5, 1.5]^{4}$, and $\chi_S$ denotes the characteristic function of the set $S$. The source term is set as a constant function $f\equiv 1$.

\textbf{Metrics.}
We use three metrics to evaluate the accuracy of the solutions $\mathbf u$ predicted by neural network-based surrogate models, compared to the direct computation $\mathbf u_\mathrm{FE}$ of the finite element solution for the given parameter values. The first metric is the mean relative error (MRE), defined as
\begin{equation*}
\mathrm{MRE} \coloneq \frac{1}{N_{\rm test}}\sum_{i=1}^{N_{\rm test}}
\frac{\|\mathbf{F}\mathbf{u}(y_{i})-\mathbf{u}_{\rm FE}(y_{i})\|_{2}}
{\|\mathbf{u}_{\rm FE}(y_{i})\|_{2}}.
\end{equation*}
It is worth noting that for the vanilla FOSLS formulation, $\mathbf{F}=\mathrm{id}$, whereas for FOSLS with frame preconditioning, $\mathbf{F}=\mathbf{H}^{\intercal}$ denotes the frame synthesis operator. The second metric is the mean squared error (MSE), given by
\begin{equation*}
\mathrm{MSE} \coloneq \frac{1}{N_{\rm test}}\sum_{i=1}^{N_{\rm test}}
\|\mathbf{F}\mathbf{u}(y_{i})-\mathbf{u}_{\rm FE}(y_{i})\|_{2}^{2}.
\end{equation*}
The third metric is the value of the FOSLS loss function~\eqref{eq:residualregression} in double precision evaluated on the test dataset. For frame coefficients predicted by the neural network, we first apply the frame synthesis operator and then substitute the reconstructed solution into the standard FOSLS loss. As shown in~\eqref{eq:LSreserr}, this loss provides a quantitative measure of the solution error. Evaluating the loss on the test dataset avoids the need to solve a linear system within the finite element method and thus provides a computationally inexpensive yet reliable performance metric. 

\textbf{Frame preconditioning.}
We employ $J=10$ frame levels. The Lagrange basis functions are normalized with respect to the $H^{1}$-norm, which is computed using two-point Gaussian quadrature. When using half-precision arithmetic, in practice, we observe numerical instability when computing these $H^{1}$-norms. To address this issue, we precompute the $H^{1}$-norms in double precision and subsequently convert them to half or single precision for use during training in half- or single-precision.

\textbf{Neural network architectures.}
We use the sigmoid linear unit (SiLU)~\cite{Elfwing2018Sigmoid,ramachandran2017Swish} activation function, and all network parameters are initialized using the Xavier Gaussian distribution~\cite{Glorot2010Understanding}.
\begin{enumerate}[(i)]
\item \emph{Full low-rank ResNet.}
For the full ResNet architecture, we employ 8 ResBlocks with a fixed rank of 8.
\item \emph{Separate low-rank ResNet.}
In the FOSLS setting, we parameterize $\sigma$ and $u$ using two independent ResNets. Each ResNet consists of 10 sub-ResNets, where each sub-ResNet maps the parameter space to the frame coefficients at a specific level. Every sub-ResNet contains 8 ResBlocks, and the rank of each ResBlock is set to the minimum of 8 and the input dimension of the ResBlock.
\item \emph{Separate frame representations.}
In the FOSLS framework, we also consider parameterizing $\sigma$ and $u$ using two independent frame-based representations. Between two consecutive prolongation operators, we insert 4 ResBlocks, i.e., $L_j = 4$ for each $j = 1,\ldots,J$ in~\eqref{eq:Lambdaj}. The rank of the ResBlocks is chosen as the minimum of 8 and the input dimension of the ResBlock.
\end{enumerate}

\textbf{Hyper-parameters for training.} 
We employ several optimizers depending on the experimental setup, with hyperparameters adjusted according to the float precision of computation (\texttt{float16}, \texttt{float32}, or \texttt{float64}). We use 6000 epochs for each optimizer.

\begin{enumerate}[(i)]
\item \emph{Adam.} For the Adam optimizer, the initial learning rate $\eta_{0}$ is set to $10^{-3}$. We apply a cosine annealing learning rate schedule
\[
\eta_t = \eta_{\min} + \frac{1}{2} (\eta_0 - \eta_{\min}) \left( 1 + \cos \frac{t \pi}{T_{\rm max}} \right)
\]
with period $T_{\rm max} = 5000$, and the minimum learning rate is set according to precision: $10^{-4}$ for half precision, $10^{-6}$ for single precision, and $10^{-10}$ for double precision. The parameter $\epsilon$ in Adam is chosen to ensure numerical stability: $10^{-4}$ for half precision, $10^{-8}$ for single precision, and $10^{-16}$ for double precision.
\item \emph{SGD.} 
The standard SGD is combined with a cosine annealing learning rate schedule, whose hyperparameters are set as those of Adam.
\item \emph{L-BFGS.} 
For the L-BFGS optimizer, we set the learning rate to $1.0$, with a maximum of 20 iterations and 25 function evaluations per step. The gradient and parameter change tolerances are set according to the precision: for half precision, both are $10^{-4}$; for single precision, $10^{-8}$; and for double precision, $10^{-12}$. The history size is fixed at 100, and the line search uses the strong Wolfe condition.
\item\emph{Natural Gradient Descent (NGD).}
For NGD, we use an initial step size of $1.0$, combined with a line search satisfying the strong Wolfe conditions and allowing at most 20 line search steps. The inner linear system is solved using the conjugate gradient method without explicitly forming the system matrix. The conjugate gradient solver is run for at most 20 iterations, and both the conjugate gradient tolerance and the $\epsilon$ regularization parameter depend on the numerical precision: $10^{-9}$ for half precision, $10^{-12}$ for single precision, and $10^{-14}$ for double precision.

\end{enumerate}

\subsection{Numerically stable training}

As reported in Table \ref{tab:with:without:preconditioning}, preconditioning leads to considerable improvements in the performance of neural network training even using the numerically unstable representation $\mathbf H^\intercal \mathbf A_y \mathbf H$ of the resulting system matrix. The corresponding loss curves are shown in Figure \ref{fig:fosls:with:without:preconditioning}. However, the advantage of four orders of magnitude in the loss value is realized by the Adam optimizer, but not by SGD.

\begin{figure}[htbp]
\centering
\includegraphics[width=1.0\linewidth]{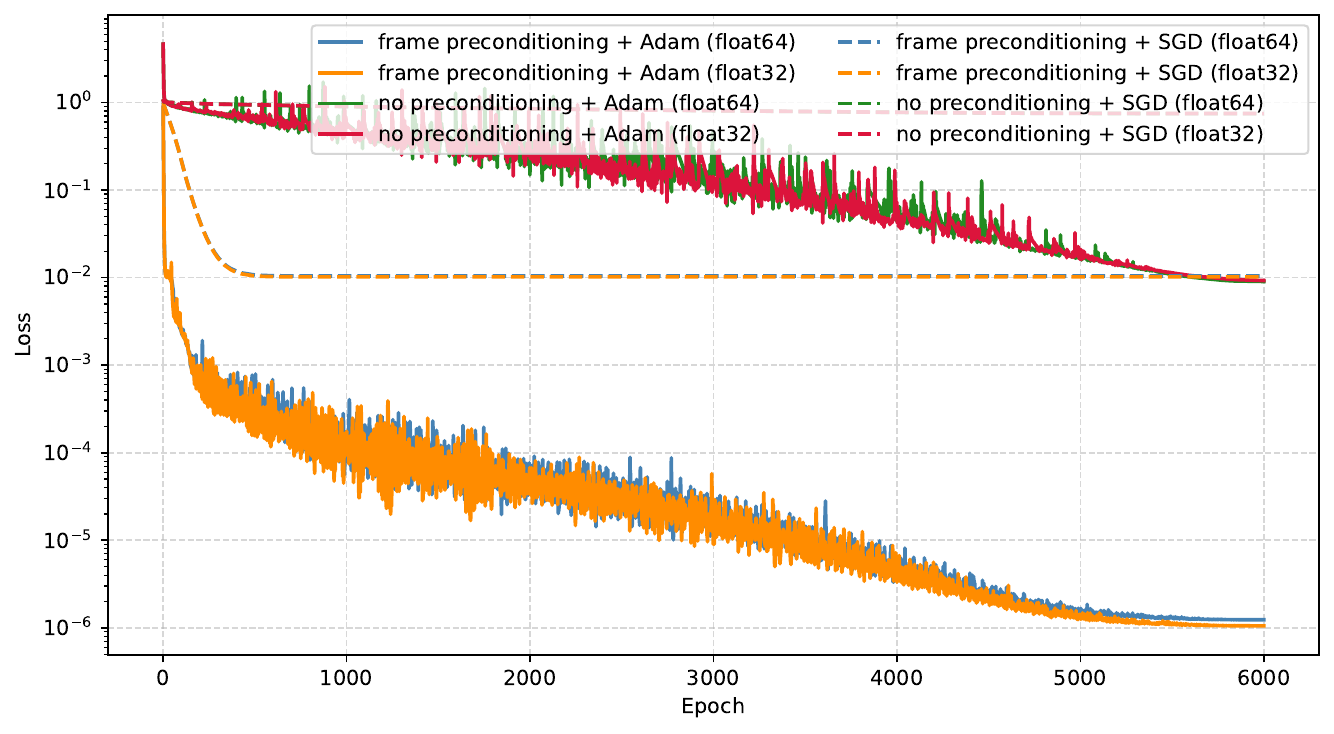}
\caption{Loss curves of FOSLS with and without preconditioning using different optimizers under different computing precisions.}
\label{fig:fosls:with:without:preconditioning}
\end{figure}

With the aid of stable frame representations, we can apply frame-based preconditioning also in half precision (using 16-bit floating point numbers), as reported in Table~\ref{tab:stable:preconditioning}. Comparing Table~\ref{tab:stable:preconditioning} with Table~\ref{tab:with:without:preconditioning}, we observe that the accuracy is comparable in single and double precision. In half precision, the loss value achieved by Adam reaches $10^{-4}$, which is close to the machine precision of half-precision floating-point arithmetic. Although L-BFGS and NGD perform well in single and double precision, their performance in half precision is inferior to that of Adam. In practice, we find that L-BFGS and NGD tend to perform better on shallower networks with larger rank when using half precision. It is worth noting that L-BFGS and NGD are more sensitive to numerical precision: their performance differs significantly between single and double precision, whereas the differences for SGD and Adam are relatively minor. The corresponding loss curves are shown in Figure~\ref{fig:fosls:stable:preconditioning}.

\begin{table}[htbp]
\centering
\caption{Comparison of FOSLS with stable preconditioning using different optimizers under different computing precisions.}
\label{tab:stable:preconditioning}
\small
\setlength{\tabcolsep}{4pt}
\begin{tabular}{lcccc}
\toprule
Optimizer & Precision & MRE & MSE & Loss \\
\midrule 
SGD & \texttt{float16} & $1.85\times 10^{-1}$ & $3.83$ & $1.86\times 10^{-2}$ \\
 & \texttt{float32} & $1.45\times 10^{-1}$ & $2.62$ & $1.06\times 10^{-2}$ \\
 & \texttt{float64} & $1.44\times 10^{-1}$ & $2.60$ & $1.06\times 10^{-2}$ \\
\midrule
Adam & \texttt{float16} & $7.25\times 10^{-3}$ & $6.73\times 10^{-3}$ & $1.02\times 10^{-4}$ \\
 & \texttt{float32} & $1.01\times 10^{-3}$ & $1.52\times 10^{-4}$ & $1.20\times 10^{-6}$ \\
 & \texttt{float64} & $1.26\times 10^{-3}$ & $2.19\times 10^{-4}$ & $1.14\times 10^{-6}$ \\
\midrule
L-BFGS & \texttt{float16} & $1.47\times 10^{-1}$ & $2.70$ & $1.21\times 10^{-2}$ \\
 & \texttt{float32} & $5.93\times 10^{-3}$ & $6.02\times 10^{-3}$ & $4.84\times 10^{-5}$ \\
 & \texttt{float64} & $9.46\times 10^{-4}$ & $1.36\times 10^{-4}$ & $1.14\times 10^{-6}$ \\
\midrule
NGD & \texttt{float16} & $3.34\times 10^{-2}$ & $1.58\times 10^{-1}$ & $1.73\times 10^{-3}$ \\
 & \texttt{float32} & $3.29\times 10^{-3}$ & $1.80\times 10^{-3}$ & $1.05\times 10^{-5}$ \\
 & \texttt{float64} & $2.62\times 10^{-3}$ & $1.26\times 10^{-3}$ & $5.08\times 10^{-6}$ \\
\bottomrule
\end{tabular}
\end{table}

\begin{figure}[htbp]
\centering
\includegraphics[width=1.0\linewidth]{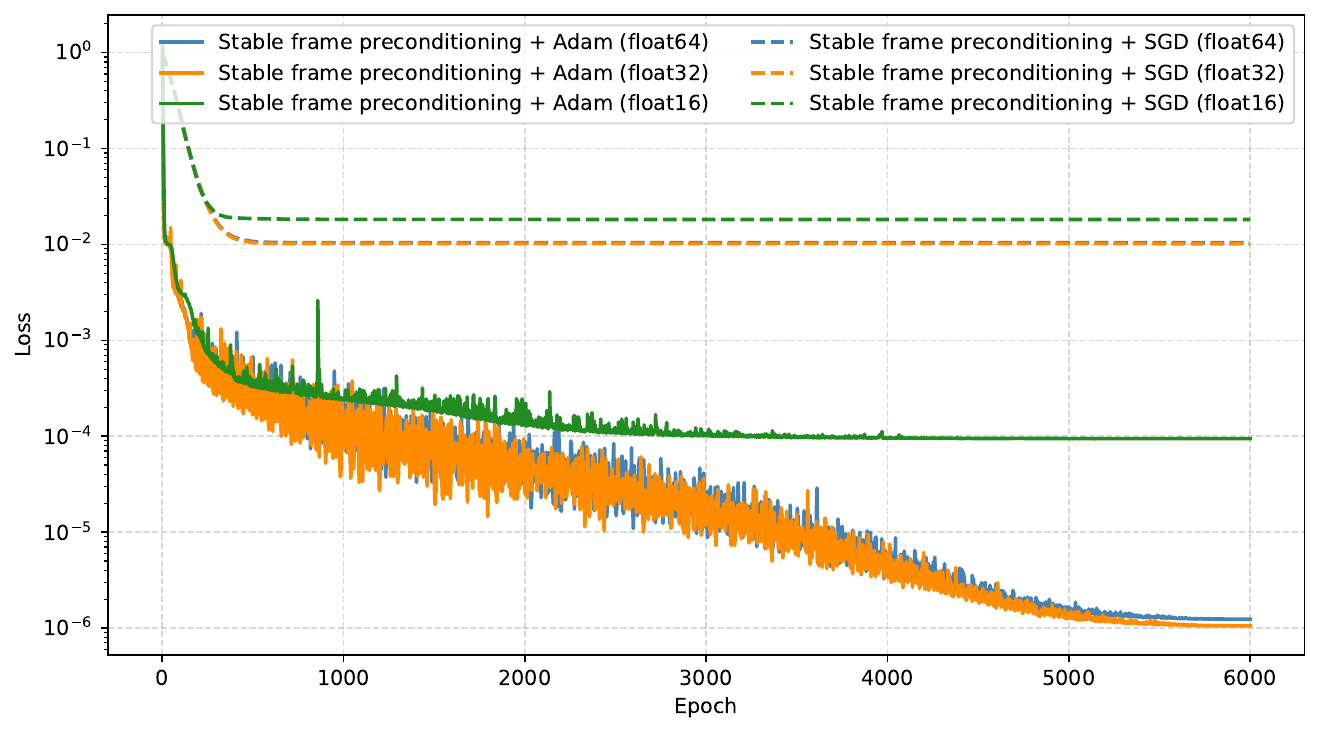}
\caption{Loss curves of numerically stable preconditioning using different optimizers under different computing precisions.}
\label{fig:fosls:stable:preconditioning}
\end{figure}

\par We further compare different network architectures introduced in Section~\ref{section:stable:structure}, as summarized in Table~\ref{tab:stable:preconditioning:sep}. The three neural networks have nearly the same number of trainable parameters (degrees of freedom, DoFs). All three architectures exhibit similar performance across the three numerical precisions.

\begin{table}[htbp]
\centering
\caption{Comparison of FOSLS with stable preconditioning using different neural network architectures.}
\label{tab:stable:preconditioning:sep}
\small
\setlength{\tabcolsep}{4pt}
\begin{tabular}{lccccc}
\toprule
Architecture & DoFs & Precision & MRE & MSE & Loss \\
\midrule 
Full ResNets & 577036 & \texttt{float16} & $7.25\times 10^{-3}$ & $6.73\times 10^{-3}$ & $1.02\times 10^{-4}$ \\
 & & \texttt{float32} & $1.01\times 10^{-3}$ & $1.52\times 10^{-4}$ & $1.20\times 10^{-6}$ \\
 & & \texttt{float64} & $1.26\times 10^{-3}$ & $2.19\times 10^{-4}$ & $1.14\times 10^{-6}$ \\
\midrule 
Separate ResNets & 577140 & \texttt{float16} & $8.83\times 10^{-3}$ & $1.10\times 10^{-2}$ & $1.65\times 10^{-4}$ \\
 & & \texttt{float32} & $3.09\times 10^{-3}$ & $1.52\times 10^{-3}$ & $2.46\times 10^{-6}$ \\
 & & \texttt{float64} & $3.59\times 10^{-3}$ & $2.11\times 10^{-3}$ & $3.52\times 10^{-6}$ \\
\midrule
Separate frame rep. & 551336 & \texttt{float16} & $1.83\times 10^{-2}$ & $5.12\times 10^{-2}$ & $3.29\times 10^{-4}$ \\
 & & \texttt{float32} & $3.28\times 10^{-3}$ & $1.58\times 10^{-3}$ & $1.63\times 10^{-5}$ \\
 & & \texttt{float64} & $3.57\times 10^{-3}$ & $1.92\times 10^{-3}$ & $1.29\times 10^{-5}$ \\
\bottomrule
\end{tabular}
\end{table}

\begin{figure}[htbp]
\centering
\includegraphics[width=1.0\linewidth]{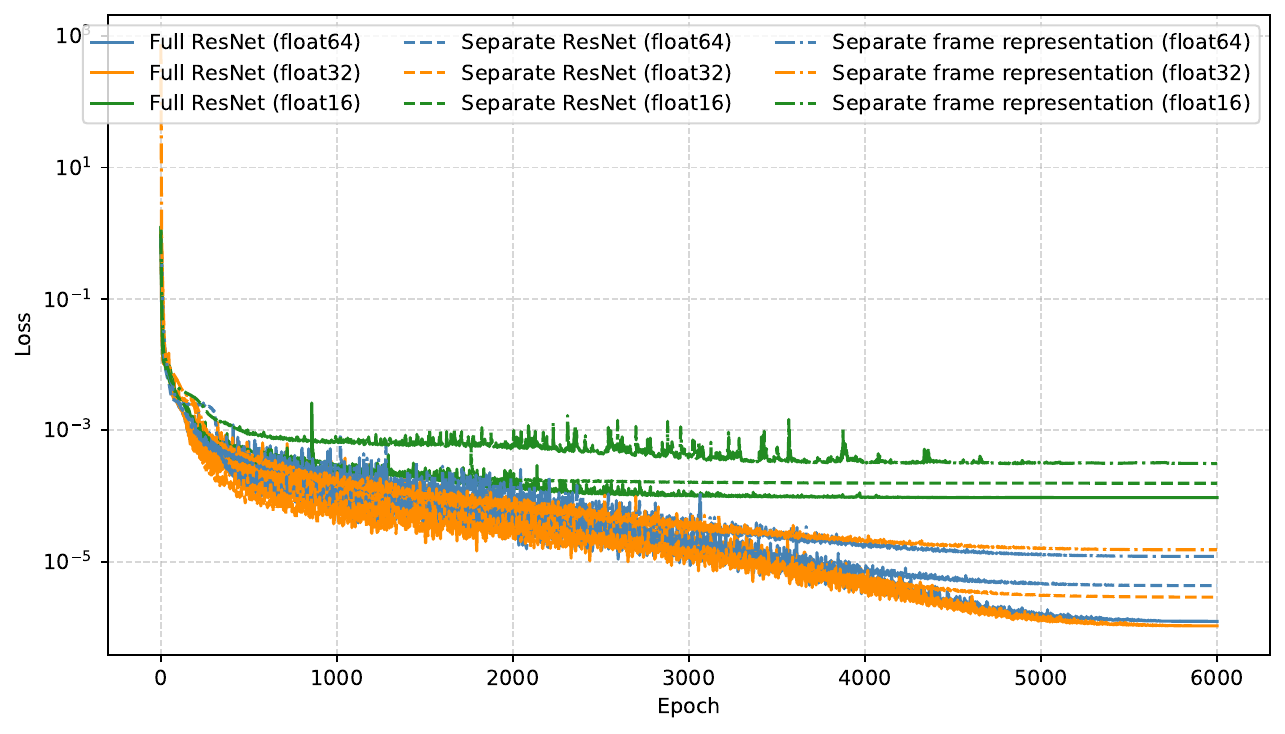}
\caption{Loss curves of numerically stable preconditioning with different network architectures under different computing precisions.}
\label{fig:fosls:stable:preconditioning_sep}
\end{figure}

\section{Conclusions}

In this work, we have investigated the effect of preconditioning on the training of neural networks for the approximation of parameter-dependent PDEs with focus on FOSLS formulations of elliptic problems. While compared to the formulation without preconditioning, we observe a striking improvement of training results for the Adam optimizer, this requires working with sufficient machine precision. We have thus proposed a modified representation of preconditioned matrices that is still easy to implement efficiently that yields numerical stability and can be used for computations using half-precision floating point numbers. 

Adam clearly outperforms SGD in all our tests with preconditioned problems. At the same time, methods such as L-BFGS or NGD do not yield any further improvement over Adam. The favorable optimization results obtained with Adam and preconditioning also show that the low-rank ResNet architecture adapted from \cite{BachmayrDahmenOster2024} that we use can achieve total solution errors on the order of the discretization error expected for the given mesh size. Variants of this architecture that use separate representations for coefficients on different frame levels did not show an improvement over a simpler joint ResNet approximation of all frame coefficients.

\section*{Acknowledgements}

The authors thank Polina Sachsenmaier and Konrad Westfeld for support with initial numerical tests.

Co-funded by the European Union (ERC, COCOA, 101170147). Views and opinions expressed are however those of the authors only and do not necessarily reflect those of the European Union or the European Research Council. Neither the European Union nor the granting authority can be held responsible for them.

\bibliographystyle{amsplain}
\bibliography{precond_stability}

\end{document}